\title{Neural Networks in Fr\'echet spaces}
\author{Fred Espen Benth, Nils Detering, Luca Galimberti}
\address{Fred Espen Benth \\
University of Oslo\\
Department of Mathematics \\
P.O. Box 1053, Blindern\\
N--0316 Oslo, Norway}
\email[]{fredb\@@math.uio.no}
\address{Nils Detering \\ 
University of California at Santa Barbara\\
Department of Statistics and Applied Probability\\
CA 93106 Santa Barbara, USA}
\email[]{detering\@@pstat.ucsb.edu}
\address{Luca Galimberti \\ 
Norwegian University of Science and Technology\\
Department of Mathematical Sciences\\
Sentralbygg 2, Gl\o shaugen, Trondheim, Norway}
\email[]{luca.galimberti\@@ntnu.no}
\newtheorem{theorem}{Theorem}[section]
\newtheorem{definition}[theorem]{Definition}
\newtheorem{lemma}[theorem]{Lemma}
\newtheorem{proposition}[theorem]{Proposition}
\newtheorem{corollary}[theorem]{Corollary}
\newtheorem{remark}[theorem]{Remark}
\newtheorem{example}[theorem]{Example}
\newcommand{\R}{\mathbb R}
\newcommand{\C}{\mathbb C}
\newcommand{\N}{\mathbb N}
\newcommand{\T}{\mathcal T}
\renewcommand{\P}{\mathbb P} 
\newcommand{\EE}{\mathbb E} 
\newcommand{\norm}[1]{\left\lVert#1\right\rVert}
\newcommand{\abs}[1]{\left |#1\right|}
\newcommand{\X}{\mathfrak X}
\newcommand{\Y}{\mathfrak Y}
\DeclareMathOperator{\Span}{span}
\DeclareMathOperator{\Supp}{supp}
\newcommand{\ft}{F^t}
\newcommand{\fth}{F^{t,H,\phi}}
\newcommand{\B}{\mathcal B}
\newcommand{\nils}[1]{\textcolor{blue}{#1}}
\newcommand{\luca}[1]{\textcolor{orange}{#1}}
\newcommand{\fred}[1]{\textcolor{red}{#1}}
\date{\today}
\begin{document}
\begin{abstract}
We define a neural network in infinite dimensional spaces for which we can show the universal approximation property. Indeed, we derive approximation results for continuous functions from a Fr\'echet space $\X$ into a Banach space $\Y$. The approximation results are generalising the well known universal approximation theorem for continuous functions from $\mathbb{R}^n$ to $\mathbb{R}$, where approximation is done with (multilayer) neural networks \cite{Cybenko1989,HORNIK1989359,FUNAHASHI1989183,LESHNO1993861}. Our infinite dimensional networks are constructed using activation functions being nonlinear operators and affine transforms. Several examples are given of such activation functions. We show furthermore that our neural networks on infinite dimensional spaces can be projected down to finite dimensional subspaces with any desirable accuracy, thus obtaining approximating networks that are easy to implement and allow for fast computation and fitting. The resulting neural network architecture is therefore applicable for prediction tasks based on functional data.
\end{abstract}
\maketitle

\section{Introduction}
The universal approximation theorem shows that any continuous function from $\mathbb{R}^n$ to $\mathbb{R}$ can be approximated arbitrary well with a one layer neural network. More precisely, for a fixed continuous function $\sigma : \mathbb{R}\rightarrow \mathbb{R}$ and $a \in \mathbb{R}^n, \ell , b \in \mathbb{R}$, a {\em neuron} is a function $\mathcal{N}_{\ell , a,b}\in C(\mathbb{R}^n ; \mathbb{R})$ defined by $x\mapsto \ell \sigma (a^{\top}x +b) $. The universal approximation theorem states conditions on the {\em activation function} $\sigma$ such that the linear space of functions generated by the neurons 
$$\mathfrak N(\sigma):=\Span \{ \mathcal{N}_{\ell , a,b}; \ell ,b \in \mathbb{R} , a\in \mathbb{R}^n \} $$ is dense with respect to the topology of uniform convergence on compacts. This means that for every $f\in C(\mathbb{R}^n; \mathbb{R})$ and compact subset $K\subset \mathbb{R}^n$ and a given $\varepsilon >0$, there exists $N\in \mathbb{N}$ and $\ell_i ,b_i \in \mathbb{R} , a_i\in \mathbb{R}^n$ for $i=1,\dots , N$ such that
$$ \sup_{x\in K}\abs{f(x) - \sum_{i=1}^{N} \mathcal{N}_{\ell_i , a_i,b_i}  (x)}\leq \varepsilon .$$
Possibly the most widely known property of $\sigma$ that was shown in Cybenko \cite{Cybenko1989} and Hornik, Stinchcombe, and White \cite{HORNIK1989359} to lead to the density of $\mathfrak N(\sigma)\subset C(\mathbb{R}^n ; \mathbb{R})$ is the {\em sigmoid} property, which requires $\sigma$ to be such that $\lim_{t\rightarrow \infty} \sigma(t)=1$ and $\lim_{t\rightarrow -\infty} \sigma(t)=0$. This condition has later been relaxed to a boundedness condition Funahashi \cite{FUNAHASHI1989183} and a non-polynomial condition Leshno {\it et al.} \cite{LESHNO1993861}. We refer the reader to Pinkus \cite{pinkus_1999} for an overview of the earlier literature on neural network approximation theory and to Berner {\it et al.} \cite{MathFoundationsDeep} for a more recent account. See also Kratsios \cite{UniversalApproxProperty} for a unified approach of approximation result for a wide class of network architectures. 

In this paper we are concerned with more general functions $f\in C(\X;\Y)$, where $\X$ is an $\mathbb{F}$-Fr\'echet space, i.e., a Fr\'echet space over the field $\mathbb F$ and $\Y$ an $\mathbb{F}$-Banach space.
We start with $\Y=\mathbb{F}$ and in the definition of a neuron, we replace $a^{\top}x +b$ by an affine function on $\X$, the activation function $\sigma : \mathbb{R} \rightarrow \mathbb{R}$ by a function in $C(\X;\X)$, and the scalar $\ell$ by a linear form. With $\langle\cdot,\cdot\rangle$ the canonical pairing between $\X'$ and $\X$ ($\X'$ denoting the topological dual of $\X$), for $\ell\in \X', A\in \mathcal{L} (\X), b\in \X$ we then define a neuron $\mathcal{N}_{\ell,A,b}$ by 
$$\mathcal{N}_{\ell,A,b}(x)= \langle \ell ,\sigma (Ax +b)\rangle$$
and ask for conditions on $\sigma:\X \rightarrow \X$ that ensure that $\mathfrak N(\sigma):=\Span \{ \mathcal{N}_{\ell , A,b}; \ell\in \X' ,A\in \mathcal{L} (\X) ,b \in \X  \} $ is dense in $C(\X;\mathbb{F})$ under some suitable topology. 

To indicate the conditions we obtain for $\sigma$, recall that any map $\psi \in \X'$ defines a hyperplane by the set of points $\Psi_0:= \{x\in \X; \langle \psi ,x \rangle =0 \}$. This hyperplane splits the space $\X$ into the sets $\Psi_{-}:= \{ x\in \X; \langle \psi ,x \rangle < 0 \}$ and $\Psi_{+}:=\{ x\in \X; \langle \psi ,x \rangle >0\}$. We show that the main property for the activation function to ensure that $\mathfrak N(\sigma)$ is dense in $C(\X; \mathbb{F})$ is, informally, that an $\psi \in \X'$ exists such that the value $\sigma (x)$ converges, as $x$ moves away from the hyperplane that is defined by $\psi$. The limiting values on both sides of the hyperplane need to be different. We provide several simple examples of easy to calculate activation functions with the required property.  In a second step, we extend our results to $f\in C(\X;\Y)$, where $\Y$ is an $\mathbb{F}$-Banach space. 

While such an approximation result might be of interest in its own, from a practical perspective it is not clear how the functions $\mathcal{N}_{\ell,A,b}$, which involve infinite dimensional quantities, can actually be programmed. We therefore address the question of approximating the maps $\mathcal{N}_{\ell,A,b}$ by finite dimensional, easy to calculate quantities. Under the assumption that the Fr\'echet space $\X$ admits a Schauder basis, we show that such an approximation is possible. The resulting neural network has an architecture similar to classical neural networks, with the exception that the activation function is now multidimensional. It does however still permit for an easy to calculate gradient, which is crucial for training the network via a back-propagation algorithm. Finally, we also derive the approximation property for deep neural networks with a given fixed number of layers.

 We emphasise that our proposed definition of a neural network in infinite dimensions is motivated by the relationship with controlled ordinary differential equations, which points towards an activation function $\sigma : \X \rightarrow \X$ rather than the classical one-dimensional maps (possibly on basis coordinates). We refer to E \cite{WeinanE} for a connection between ordinary differential equations and deep neural networks, as well as Section \ref{sec:multilayer} in this paper.

Possible applications of our results are within the area of machine learning, in particular in the many situations where the input of each sample in the training set is actually a function (see e.g. Ramsey and Silverman \cite{RamSilver} for an account on functional data analysis and examples). In our accompanying paper \cite{BDG3} we use the results obtained here to derive numerical solutions of partial differential equations for a range of initial conditions or coefficients at once (see Han, Jentzen and E \cite{Han8505}, Hutzenthaler {\it et al.} \cite{dneuralnetPDE}, Cuchiero, Larsson and Teichmann \cite{Cuchiero2020DeepNN}, Beck {\it et al.} \cite{beck2021overview} for papers on neural networks and partial differential equations). 
There are other instances where functional data appears naturally. For example grey scale images can be understood as a function $I: [0,1]^2 \rightarrow [0,1]$. For imagine classification or recognition problems (see M\"uller, Soto-Ray and Kramer \cite{https://doi.org/10.48550/arxiv.2201.11440} and Tian \cite{brainImage}) one is now interested in approximating the function $f$ that assigns to each image its classification $f(I)$. Additional examples are stock price prediction (see Yu and Yan \cite{stock:predition}), option pricing and hedging (see Buehler {\it et al.} \cite{doi:10.1080/14697688.2019.1571683} and Benth, Detering and Lavagnini \cite{accuracyBDL}), and many others. 

If the function space of the inputs is a Fr\'echet space with a Schauder basis, this basis provides structural information about the elements. Traditional neural networks must be of very high dimension (large input dimension, large number of neurons) to approximate a function well. The more variability there is in the function, the larger the number of parameters that is needed. Therefore, instead of using a classical network to approximate a function on a grid, our approach allows one to use information in the basis functions instead to capture the structure and get theoretical convergence results. Our approximation thus focuses on features of the function related to the coefficients in the basis expansion. Moreover, we show that there is a large class of possible activation functions $\sigma : \X \rightarrow \X$ and a choice that is suitable for the approximation problem at hand can significantly reduce the number of nodes required to approximate a given function $f$ sufficiently well. We refer to our accompanying paper \cite{BDG2} where this idea is used to price flow forward derivatives in energy markets.

{\bf Related literature:} The approximation with neural networks of functionals and operators that are defined on some general (possibly infinite dimensional) space $\X$ goes back to Sandberg \cite{76498}. In Sandberg \cite{76498} in the context of discrete time systems, non-linear functionals on a space of functions from $\mathbb{N} \cup \{ 0\}$ to $\mathbb{N}$ are approximated with neural networks. In Chen and Chen \cite{286886, 392253} the authors consider the approximation of non-linear operators defined on infinite dimensional spaces and use these results for approximating the output of dynamical systems. Among other results they approximate functions $f: K\subset \X \to \R$, where $\X$ is Banach, $K$ is compact and $f$ is continuous.
In Mhaskar and Hahm \cite{6795742} the authors derive networks that approximate the functionals on the function spaces $L^p ([-1,1]^s)$ for $1 \leq p < \infty $ and $C ([-1,1]^s)$ for integer $s\geq 1$. The network architectures in all these works differ slightly but they have in common an activation function $\sigma$ with image in $\mathbb{R}$ instead of $\X$ as we propose it here. The recent article by Kratisos \cite{UniversalApproxProperty} considers a space $M(\X,\Y)$ of functions from a metric space $\X$ to another metric space $\Y$. Among other results, under the assumption that this functions space is homeomorphic to an infinite-dimensional Fr\'echet space, the author derives properties of neural network architectures that are dense within this space. We would like to stress however that in the situation we consider in this paper, the domain space $\X$ is a Fr\'echet space. The function space $C(\X, \mathbb{R})$ however is usually not a Fr\'echet space unless $\X$ is finite dimensional. 
Infinitely wide neural networks, with an infinite but countable number of nodes in the hidden layer have been studied in the context of Bayesian learning, Gaussian processes and kernel methods by several authors, see e.g., Neal \cite{Neal}, Williams \cite{Williams}, Cho and Saul \cite{ChoSaul} and Hazan and Jaakola \cite{HJ}. Hornik \cite{Hornik-93} provides approximation results for such infinitely wide networks. Guss and Salakhutdinov \cite{Guss} prove the universal approximation property for two-layer infinite dimensional neural networks. They show this for continuous maps between spaces of continuous functions on compacts. We also refer the reader to Kratsios and Bilokopytov \cite{inproceedings} for approximations on manifolds in $\mathbb{R}^n$.

The outline for the paper is as follows. In Section~\ref{main:approx} we derive our first main result Theorem~\ref{prop: density}, which shows that if $\sigma$ has a property called {\em discriminatory} property, then $\mathfrak N(\sigma)$ is dense in $C(\X;\mathbb{F})$. The main technical challenge is then to derive conditions that ensure that a given function $\sigma : \X \rightarrow \X$ is actually discriminatory, which is done in Theorem~\ref{prop: discriminatory condition is satisfied}. We also provide some first examples of discriminatory functions in this section. We then extend these results in Section~\ref{sec:BanachvaluedNN} to functions $f\in C(\X;\Y)$, $\Y$ Banach space. In Section~\ref{finte:approx:section} we address the question of finite dimensional approximations to the neural network which can easily be computed and trained. In most generality, only under the assumption that the Fr\'echet space $\X$ has a Schauder basis, the approximation is covered in Theorem~\ref{prop: finite dimensional approx, Frechet}. In Section~\ref{sec:multilayer} we cover the approximation with multi-layered neural networks. 

\section*{Acknowledgements}
Fred Espen Benth acknowledges support from SPATUS, a Thematic Research Group funded by UiO:Energy.

Luca Galimberti has been supported in part by the grant Waves and Nonlinear Phenomena (WaNP) from the Research Council of Norway.

\section{An abstract approximation result}\label{main:approx}

Let $\mathbb{F}\in \{\R,\C\}$, and let $\mathfrak X$ be an $\mathbb F$-Fr\'echet space. Let $(p_k)_{k\in\N}$ be an increasing sequence of seminorms that generates the topology of $\mathfrak X$. We can then consider a metric $d$ on $\mathfrak X$ (that generates the same topology) given by
\begin{equation}\label{metric:loc:conv:space}
    d(x,y):= \sum_{k=1}^\infty 2^{-k}\frac{p_k(x-y)}{1+p_k(x-y)}, 
\end{equation}
for $x,y\in \mathfrak X$.

Let us consider $\sigma: \X \to \X$ continuous function. Let $A:\X\to \X$ be in $\mathcal{L}(\X)$, i.e. a linear and continuous operator, $b\in\X$ and $\ell\in \X'$, where $\X'$ denotes the topological dual of $\X$. Let us consider the following function:
\begin{equation}\label{eq: definition of 1 layer NN}
    \mathcal{N}_{\ell,A,b} : \X\to \mathbb F, \quad \mathcal{N}_{\ell,A,b}(x):= \langle \ell,\sigma(Ax+b)\rangle = \ell(\sigma(Ax+b)), \quad x\in\X,
\end{equation}
where $\langle\cdot,\cdot\rangle$ is the canonical pairing between $\X'$ and $\X$. We will call such function a {\it neuron}. 
Every neuron $\mathcal{N}_{\ell,A,b}$ is clearly continuous by composition of continuous maps, i.e. $\mathcal{N}_{\ell,A,b}\in C(\X;\mathbb F)$, the space of $\mathbb F$-valued continuous functions on $\X$.

We define
\begin{equation*}
    \mathfrak N(\sigma) := \Span\{
    \mathcal{N}_{\ell,A,b}; \,\,\ell\in\X', A\in\mathcal{L}(\X),b\in\X    \},
\end{equation*}
namely, we consider all linear combinations of the form
\begin{equation*}
    \sum_{j=1}^N \alpha_j \mathcal{N}_{\ell_j,A_j,b_j}, \quad \alpha_j\in\mathbb F,N\in\N.
\end{equation*}
Evidently, $\mathfrak N(\sigma)\subset C(\X;\mathbb F)$. 
The maps $\mathcal{N}_{\ell_1,A_1,b_1}, \dots , \mathcal{N}_{\ell_N,A_N,b_N}$ build a {\it hidden layer} with $N$ neurons. 

We endow $C(\X;\mathbb F)$ with the topology of uniform convergence on compacts. Being $\X$ metrizable, it is clearly Tychonoff, and in particular completely regular. 
For a given compact subset $K\subset \X$, define
\begin{equation*}
    q_K(f) := \sup_{x\in K}\abs{f(x)},\quad f \in C(\X;\mathbb F).
\end{equation*}
This is a seminorm on $C(\X;\mathbb F)$. We consider the topology generated by the family of seminorms $\{q_K; K\subset \X, \text{ compact} \}$, which is the coarsest topology that makes all the seminorms continuous functions on $C(\X;\mathbb F)$. This is also called the projective topology induced by the maps $q_K$ for $K$ compact or the topology of compact subsets. Thus, we obtain a locally convex topology on $C(\X;\mathbb F)$, namely $C(\X;\mathbb F)$ is an $\mathbb F$-locally convex space. Conway \cite[Proposition 4.1, p. 114]{conway2010} provides us with the following Riesz representation theorem, which we are going to employ in the sequel:

\begin{proposition}\label{Riesz}
If $\phi:C(\X;\mathbb F)\to \mathbb F$ is a continuous and linear functional, then there is a compact set $K\subset\X$ and a regular Borel measure $\mu$ on $K$ such that $\phi(f)=\int_Kf\,d\mu$ for every $f\in C(\X;\mathbb F)$. Conversely, each such measure defines an element of $C(\X;\mathbb F)'$. (Observe en passant that $\abs{\mu}(K)<\infty$.)
\end{proposition}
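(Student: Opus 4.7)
My plan is to reduce the statement to the classical Riesz representation theorem for continuous functions on a compact Hausdorff space, and I would proceed in four steps.

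First, I would translate the continuity of $\phi$ at $0$ into a concrete estimate. Since the topology on $C(\X;\mathbb F)$ is the locally convex topology generated by the seminorms $\{q_K : K\subset\X \text{ compact}\}$, continuity of the linear functional $\phi$ yields finitely many compact sets $K_1,\dots,K_n$ and a constant $C>0$ such that
\begin{equation*}
    \abs{\phi(f)}\le C\max_{1\le i\le n}q_{K_i}(f),\quad f\in C(\X;\mathbb F).
\end{equation*}
Setting $K:=K_1\cup\cdots\cup K_n$, which is again compact in $\X$, this simplifies to $\abs{\phi(f)}\le C\,q_K(f)$ for all $f\in C(\X;\mathbb F)$.

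Second, I would factor $\phi$ through the restriction map. Since $\X$ is metrizable, it is normal and Hausdorff, and the compact set $K$ is closed in $\X$. Tietze's extension theorem (applied coordinatewise when $\mathbb F=\C$) then shows that the restriction map $r:C(\X;\mathbb F)\to C(K;\mathbb F)$, $r(f):=f|_K$, is surjective. The estimate from Step 1 tells us that $\phi(f)$ depends only on $f|_K$: indeed if $r(f_1)=r(f_2)$ then $q_K(f_1-f_2)=0$, so $\phi(f_1)=\phi(f_2)$. Consequently, there exists a well-defined linear functional $\tilde\phi$ on $C(K;\mathbb F)$, characterized by $\tilde\phi(g):=\phi(\tilde g)$ for any continuous extension $\tilde g\in C(\X;\mathbb F)$ of $g$, and
\begin{equation*}
    \abs{\tilde\phi(g)}\le C\,\lVert g\rVert_{\infty,K},\quad g\in C(K;\mathbb F),
\end{equation*}
so $\tilde\phi\in C(K;\mathbb F)'$.

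Third, I would invoke the classical Riesz--Markov--Kakutani representation theorem on the compact Hausdorff space $K$: there exists a (finite) regular Borel measure $\mu$ on $K$ such that $\tilde\phi(g)=\int_Kg\,d\mu$ for every $g\in C(K;\mathbb F)$. Substituting $g=f|_K$ and undoing the factorization yields $\phi(f)=\int_Kf\,d\mu$ for every $f\in C(\X;\mathbb F)$, which is the desired representation; automatically $\abs{\mu}(K)<\infty$.

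For the converse, given a compact $K\subset\X$ and a regular Borel measure $\mu$ on $K$ with $\abs{\mu}(K)<\infty$, linearity of $f\mapsto\int_Kf\,d\mu$ is immediate, and the bound $\abs{\int_Kf\,d\mu}\le \abs{\mu}(K)\,q_K(f)$ together with continuity of the seminorm $q_K$ in the projective topology on $C(\X;\mathbb F)$ shows that this map is continuous, i.e.~an element of $C(\X;\mathbb F)'$. The main (only) obstacle here is really the first step: extracting a single dominating seminorm $q_K$ from the abstract continuity of $\phi$, after which Tietze extension and the classical Riesz theorem do the rest.
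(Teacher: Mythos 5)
The paper does not prove this proposition at all: it is quoted verbatim from Conway \cite[Prop.~4.1, p.~114]{conway2010} and used as a black box. Your argument is correct and is essentially the standard proof of that cited result: extract a single dominating seminorm $q_K$ from continuity (using that a finite union of compacts is compact), factor $\phi$ through the restriction map $C(\X;\mathbb F)\to C(K;\mathbb F)$, which is surjective by Tietze since $\X$ is metrizable hence normal and $K$ is closed, and then apply the classical Riesz--Markov--Kakutani theorem on the compact metric space $K$; the converse estimate $\abs{\int_K f\,d\mu}\leq\abs{\mu}(K)\,q_K(f)$ is likewise the right one. No gaps.
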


We recall that for a locally compact space $Y$ equipped with its Borel $\sigma$-algebra $\mathcal B(Y)$, a positive measure $\nu$ on $\mathcal{B}(Y)$ is a regular Borel measure if
\begin{enumerate}
    \item $\nu(F)<\infty$ for every $F\subset Y$ compact,
    \item for any $E\in\mathcal{B}(Y)$, $\nu(E)=\sup\{\nu(F); F\subset E, F \text{ compact}\}$,
    \item for any $E\in\mathcal{B}(Y)$, $\nu(E)=\inf\{\nu(U); U\supset E, U \text{ open}\}$.
\end{enumerate}
If $\nu$ is complex-valued or signed instead, then it is regular if $\abs{\nu}$ is.

In the following the expression $(\mu,K)$ will denote a compact subset $K\subset\X$ and a regular $\mathbb F$-valued Borel measure $\mu$ on $K$. We  say that $\sigma:\X\to\X$ continuous is discriminatory if for any fixed pair $(\mu,K)$ 
\begin{equation*}
    \int_K \langle \ell,\sigma(Ax+b)\rangle\, \mu(dx) = 0
\end{equation*}
for all $\ell\in\X',A\in\mathcal L(\X),b\in\X$ implies that $\mu=0$. 
\begin{remark}
It would be tempting, albeit more challenging, to establish our universal approximation result (Thm \ref{prop: density}) in a ``global'' setting, namely to work directly in the space of bounded continuous functions $C_b(\X;\mathbb F)$, endowed with the supremum norm (upon imposing suitable boundedness conditions on the non-linearity $\sigma$), rather than staying at a ``local'' level as we are doing now. 

The main obstruction that prevented us from employing this approach is explained by the succeeding observation: If we aim at following Cybenko's blueprint \cite{Cybenko1989}
(refer to the proof of Thm. \ref{prop: density} below) to establish our result, then in that case we would be required to work with the space 
\begin{equation*}
    rba(\X):= \{
    \mu:\mathcal{B}(\X)\to \mathbb F; \; \mu(\emptyset) =0, \text{finitely additive, finite and regular}
    \}
\end{equation*}
which is known to be the dual of $C_b(\X;\mathbb F)$, i.e. $C_b(\X;\mathbb F)'=rba(\X)$. Dealing with finitely additive measures is more involved, because many standard results from classical measure theory cease to hold. In particular, at this stage it is not clear to us to envisage a suitable set of conditions that the non-linearity $\sigma$ must satisfy in order to be discriminatory (see Def. \ref{sigmoid}).  

Nonetheless, we deem this potential extension of our result to be interesting and worthy to be explored (most likely by deviating completely from Cybenko's strategy of proof), and we hope to be able to come back to this question in the future.  
\end{remark}

The following first main result shows the density of $\mathfrak N(\sigma)$ if $\sigma$ is discriminatory. The result takes inspiration from Cybenko \cite{Cybenko1989} (see also \cite{FUNAHASHI1989183}, \cite{HORNIK1989359} and \cite{LESHNO1993861}), where a similar result has been shown for the case $\X = \mathbb{R}^n$. For general $\X$ however, showing that a function $\sigma:\X\to\X$ is actually discriminatory can be involved. Later, in Theorem~\ref{prop: discriminatory condition is satisfied} we therefore state conditions that can easily be verified and give rise to a large family of discriminatory functions. 
\begin{theorem}\label{prop: density}
Let $\X$ be an $\mathbb F$-Fr\'echet space, and let $\sigma:\X\to\X$ be continuous and discriminatory. Then $ \mathfrak N(\sigma)$ is dense in $C(\X;\mathbb F)$ when equipped with the projective topology with respect to the seminorms $q_K$. In other words, given $f\in C(\X;\mathbb F)$, then, for any compact subset $K$ of $\X$, and any $\varepsilon>0$, there exists $ \sum_{m=1}^M \alpha_m\mathcal{N}_{\ell_m,A_m,b_m}\in  \mathfrak N(\sigma)$ with suitable $\alpha_m\in\mathbb F, \ell_m\in\X',A_m\in\mathcal{L}(\X)$ and $b_m\in\X$ such that
\begin{equation*}\label{approx:prop}
    \sum_{m=1}^M \alpha_m\mathcal{N}_{\ell_m,A_m,b_m} \in 
\{ g \in C(\X;\mathbb F);\; q_{K}(g-f)<\varepsilon\}.
\end{equation*}
\end{theorem}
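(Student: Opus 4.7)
The plan is to follow the strategy of Cybenko \cite{Cybenko1989}, adapted to the locally convex setting at hand, using a Hahn--Banach separation argument and the Riesz-type representation (Proposition \ref{Riesz}).

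First, I would observe that $C(\X;\mathbb F)$ equipped with the projective topology generated by the seminorms $\{q_K\}$ is a Hausdorff locally convex space. For such a space, a classical consequence of the Hahn--Banach theorem is that a linear subspace $W$ is dense if and only if every continuous linear functional on $C(\X;\mathbb F)$ that vanishes on $W$ must be identically zero. (In the complex case, one applies the complex Hahn--Banach theorem; the statement is identical.) Since $\mathfrak N(\sigma)$ is by definition a linear subspace of $C(\X;\mathbb F)$, I therefore aim to show: if $\phi\in C(\X;\mathbb F)'$ satisfies $\phi(\mathcal{N}_{\ell,A,b})=0$ for all admissible $(\ell,A,b)$, then $\phi\equiv 0$.

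Next, given such a $\phi$, I would invoke Proposition \ref{Riesz} to obtain a compact set $K\subset \X$ and a regular $\mathbb F$-valued Borel measure $\mu$ on $K$ (of finite total variation) such that
\begin{equation*}
    \phi(f) = \int_K f\,d\mu, \qquad f\in C(\X;\mathbb F).
\end{equation*}
Applying this representation to the neurons $\mathcal{N}_{\ell,A,b}$ yields
\begin{equation*}
    \int_K \langle \ell,\sigma(Ax+b)\rangle\,\mu(dx) = \phi(\mathcal{N}_{\ell,A,b}) = 0
\end{equation*}
for every $\ell\in\X'$, $A\in\mathcal{L}(\X)$, $b\in\X$. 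This is precisely the hypothesis in the definition of discriminatory, so the discriminatory property of $\sigma$ forces $\mu=0$. Hence $\phi\equiv 0$, and density follows. Once density is established, unpacking what density in the projective topology generated by the $q_K$ means yields the concrete $\varepsilon$-approximation statement in the theorem: given $f\in C(\X;\mathbb F)$, a compact $K\subset\X$ and $\varepsilon>0$, the basic open neighborhood $\{g: q_K(g-f)<\varepsilon\}$ of $f$ meets $\mathfrak N(\sigma)$.

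I do not foresee a genuinely hard step: the only points requiring care are (i) confirming that Hahn--Banach applies in the locally convex (non-normed) setting of $C(\X;\mathbb F)$ with the compact-open topology, and (ii) invoking the Riesz representation in the form given by Proposition \ref{Riesz}, which is tailored to this setting. All the real technical content of the theorem has been displaced into the definition of discriminatory, so the argument becomes essentially formal once the right dual-space description is at hand. The genuine work will surface later, in proving Theorem \ref{prop: discriminatory condition is satisfied}, which produces actual discriminatory $\sigma$'s.
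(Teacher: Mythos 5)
Your proposal is correct and takes essentially the same approach as the paper: the paper phrases the argument as a proof by contradiction, explicitly constructing via Hahn--Banach a nonzero continuous functional vanishing on $\operatorname{cl}(\mathfrak N(\sigma))$ and then contradicting the discriminatory property, whereas you invoke the equivalent standard density criterion (a subspace of a locally convex space is dense iff every continuous linear functional annihilating it vanishes). The substantive ingredients --- Hahn--Banach separation, the representation from Proposition \ref{Riesz}, and the discriminatory property of $\sigma$ --- are used identically in both arguments.
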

\begin{proof}

We assume that $\text{cl}(\mathfrak N(\sigma)) \subsetneq C(\X;\mathbb F)$, and observe that $\text{cl}(\mathfrak N(\sigma))$ is clearly still a vector subspace.

We choose $u_0\in C(\X;\mathbb F)\setminus \text{cl}(\mathfrak N(\sigma))$. Since the complement of $\text{cl}(\mathfrak N(\sigma))$ is open, we may find $n\in \mathbb{N}$, seminorms $q_{K_1},\dots ,q_{K_n}$ on $C(\X;\mathbb F)$ and $\varepsilon_1,\dots ,\varepsilon_n>0$ such that
\begin{equation*}
\mathcal{U}:= \bigcap_{j=1}^n 
\{ u \in C(\X;\mathbb F);\; q_{K_j}(u-u_0)<\varepsilon_j\} \subset C(\X;\mathbb F) \setminus \text{cl}(\mathfrak N(\sigma)) .
\end{equation*}

Clearly $u_0\in\mathcal U$, $\mathcal U$ is convex, open and disjoint from $\text{cl}(\mathfrak N(\sigma))$. From one of the Corollaries of the Hahn-Banach Theorem (see e.g. Narici \cite[Thm. 8.5.4]{alma990006098300203776}) there exists $\phi: C(\X;\mathbb F)\to \mathbb F$ linear and continuous such that
\begin{equation*}
    \phi\big|_{\operatorname{cl}(\mathfrak N(\sigma))} = 0, \quad \Re(\phi) >0\,\, \text{ on }\, \mathcal{U}.
\end{equation*}
In particular, $\phi$ is not identically zero. Then by Proposition~\ref{Riesz}, there exists a compact subset $K\subset\X$ and a regular Borel measure (complex or signed) $\mu\neq 0$ on $K$ 
such that
\begin{equation*}
    \phi(f)= \int_K f(x)\,\mu(dx), \quad f \in C(\X;\mathbb F).
\end{equation*}
In particular, for any $\ell\in\X',A\in\mathcal L(\X),b\in\X$ it holds
\begin{equation*}
    \int_K \langle \ell,\sigma(Ax+b)\rangle\, \mu(dx) = 0.
\end{equation*}
But $\sigma$ was assumed to be discriminatory. Thus we infer $\mu=0$, and this is a contradiction to $\Re(\phi) >0$ on $\mathcal{U}$. We conclude that $\mathfrak N(\sigma)$ is dense in $C(\X;\mathbb F)$ with respect to the topology of compact subsets of $\X$. This implies that there exits $M$ and $\alpha_m\in\mathbb F, \ell_m\in\X',A_m\in\mathcal{L}(\X)$ and $b_m\in\X$ for $m=1,\dots, M$ such that (\ref{approx:prop}) holds.
\end{proof}


\begin{example}
Because Theorem~\ref{prop: density} allows us to approximate continuous functions on compact subsets of $\X$ with neural networks, let us outline a typical example of an infinite dimensional compact subset. First recall that for $\X$ Banach space, a subset $S\subset \X$ is compact if and only if 
(i) $S$ is closed and bounded, (ii) for all $\varepsilon >0$, there exists a finite dimensional subspace $\X_{\varepsilon} \subset \X$ such that for all $s\in S$, it holds that $d(s,\X_{\varepsilon})< \varepsilon$. Let now $\X$ be a separable Hilbert space and let $( e_k)_{k\in \N}$ be an orthonormal 
basis for $\X$. Then every $x\in \X$ can be represented as $x=\sum_{k=1}^{\infty}x_k e_k$ with coefficients $x_k\in \mathbb{F}$. Let us choose $(s_k)_{k\in \N}\in \ell^2$ with $s_k\geq 0$ for all $k\in \N$. Here $\ell^2$ denotes the space of square integrable sequences. The set 
\begin{equation}
    S:=\{x\in \X : \abs{x_k} \leq s_k,\;\; \forall k\in \N \} 
\end{equation}
is then compact. To see this, first observe that $S$ is clearly bounded. Now, let $y\in cl(S)$. Then we may find a sequence $(x(n))_{n\in \N}$ in $S$ such that $x(n)$ converges to $y$. This in particular means that $x_k(n)$ converges to $y_k$ for all $k\in \N$. But this implies that $\abs{y_k}\leq s_k$ and hence $y\in S$ and $S$ is closed (i.e., (i) holds). Finally, let $\varepsilon > 0$, then choose $N_{\varepsilon}\in\mathbb N$ such that $$\sum_{k=N_{\varepsilon}+1}^{\infty} s_k^2 < \varepsilon^2$$ and set $\X_{\varepsilon}:=\text{span} \{e_1, \dots , e_{N_{\varepsilon}} \}$, which is clearly finite dimensional. For any $x\in S$ it holds that $$\norm{x-\sum_{k=1}^{N_{\varepsilon}} x_k e_k}^2=\sum_{k=N_{\varepsilon}+1}^{\infty}x_k^2 < \varepsilon^2 ,$$ which clearly implies that $d(x, \X_{\varepsilon}) \leq \norm{x- \sum_{k=1}^{N_{\varepsilon}}x_k e_k}< \varepsilon$ and hence (ii) holds.
\end{example}

For the sequel, we need a boundedness assumption on the activation function $\sigma$. First, recall that a set $A\subset\mathfrak{X}$ is von Neumann-bounded if for any $k\in \N$ there exists $c_k>0$ such that $\sup_{x\in A}p_k(x)\leq c_k$. 
We assume that the set 
\begin{equation}
    \sigma(\X)\subset \X
\end{equation}
is von Neumann-bounded. 
\begin{remark}
We have another concept of metric-boundedness available:
A subset $A$ of a metric space $(\X, d)$ is bounded if there exists $R>0$ such that for all $x_1,x_2 \in A$ it holds $d(x_1,x_2)<R$.
This concept is not sufficiently stringent, because $diam(\X)\leq 1$ under the metric defined in \eqref{metric:loc:conv:space}, and thus any subset of $\X$ is bounded. von Neumann-boundedness is more well-suited when one works with metrizable topological vector spaces. 
\end{remark}
Assuming von Neumann-boundedness is convenient because it enables us to interchange limits and integrals. Observe that in the case in which $\X$ is normed, we are back to the classical concept of boundedness. 


In view of the von Neumann-boundedness assumption on $\sigma$, for any $\ell\in\X',A\in\mathcal L(\X),b\in\X$
\begin{equation*}
    \abs{\mathcal{N}_{\ell,A,b}(x)}\leq C_\ell \,p_{k_\ell} (\sigma(Ax+b)),\quad x\in \X
\end{equation*}
for some constant $C_\ell\geq 0$ (compare Schaefer \cite[Thm. 1.1, p. 74]{schaefer1971topological}), and thus, for a constant $C(\ell,\sigma)$ depending on $\ell$ and $\sigma$
\begin{equation*}
    \abs{\mathcal{N}_{\ell,A,b}(x)}\leq C(\ell,\sigma),\quad x\in \X.
\end{equation*}

We next investigate under which conditions a non-linear function $\sigma$ is discriminatory. From now on, we assume that $\mathbb F=\R$, because we need that hyperplanes disconnect the space $\X$. If $\mathbb F=\C$, this of course, cannot hold.

We now state a condition that ensures that $\sigma$ is discriminatory. In order to develop some intuition for this condition, first recall that any $\psi\in \X'\setminus\{0\}$ defines a hyperplane in $\X$ by the set $\Psi_0=\ker(\psi)$. This hyperplane splits $\X$ between the two half-spaces $\Psi_+ =\{ x\in\X; \langle\psi,x\rangle >0 \}$ and $\Psi_- =\{ x\in\X; \langle\psi,x\rangle <0 \}$, which lie on either side of the hyperplane. It turns out that measures on $\mathcal{B} (\X)\cap K$ are fully determined by their values on the half-spaces arising from all shifted hyperplanes. If now $\sigma$ splits the space $\X$ in the sense that there exists one particular hyper-plane $\Psi_0$ such that on either side of this hyperplane, the function $\sigma(\lambda x)$ converges as $\lambda \rightarrow \infty$, then  this implies that $\sigma(\lambda x)$ converges pointwise to a function that is constant on both half-spaces separated by $\Psi_0$. Integrating this pointwise limit over either of those spaces determines the value of the measure on them. The maps  $A\in\mathcal L(\X),b\in\X$ now allow to rotate, shift and project to all possible half-spaces and determine the measure on them (see Lemma~\ref{lemma: replication of linear functionals}).

The following separating property is the infinite-dimensional counterpart to the well known sigmoidal property for functions from $\mathbb{R}$ to $\mathbb{R}$ (see Cybenko \cite{Cybenko1989}):
\begin{definition}{Separating property:}\label{sigmoid}
There exist $\psi\in \X'\setminus\{0\}$ and $u_+,u_-,u_0\in\X$ such that either $u_+ \notin \Span \{u_0,u_-\}$ or $u_- \notin \Span \{u_0,u_+ \}$ and such that 
\begin{equation}\label{eq: abstract condition on sigma}
\begin{cases}
\lim_{\lambda\to\infty} \sigma(\lambda x) = u_+, \text{ if } x\in \Psi_+\\
\lim_{\lambda\to\infty} \sigma(\lambda x) = u_-, \text{ if } x\in \Psi_-\\
\lim_{\lambda\to\infty} \sigma(\lambda x) = u_0, \text{ if } x\in \Psi_0\\
\end{cases}
\end{equation}
where we have set as above
\begin{equation*}
    \Psi_+ =\{ x\in\X; \langle\psi,x\rangle >0 \}, \quad \Psi_- =\{ x\in\X; \langle\psi,x\rangle <0 \}
\end{equation*}
and $\Psi_0=\ker(\psi)$.
\end{definition}
We point out that as a particular case of the Separating property we may choose $u_0=u_-=0$ and $u_+\neq 0$ for instance. We now provide a first example of a function $\sigma$ that fulfills the separating property. It is in the spirit of the classical Sigmoid activation function. More examples are provided in Section~\ref{additional:examples}.
\begin{example}\label{ex: abstract condition on sigma}
We are going to give a construction of a continuous and von Neumann-bounded function $\sigma:\X\rightarrow\X$ satisfying the Separating property in Definition~\ref{sigmoid}, for $u_+,u_-,u_0\in \X$ such that either $u_+ \notin \Span \{u_0,u_-\}$ or $u_- \notin \Span \{u_0,u_+ \}$.

Let us recall this abstract result first: given a metric space $(Z,d)$ and $\emptyset \neq Y\subset Z$, define
\begin{equation*}
    F_\varepsilon(x) :=\max (1-\varepsilon^{-1}d(x,Y),0),\quad x\in Z,\,\varepsilon>0.
\end{equation*}
Then $F_\varepsilon$ is Lipschitz continuous, $F_\varepsilon\in[0,1]$ and $F_\varepsilon(x)\to I_Y(x)$ for any $x\in Z$ as $\varepsilon\to 0$.

Consider $\psi\in \X'\setminus\{0\}$ arbitrary. We approximate with this trick the indicator functions $I_{\{\psi \geq 1\}}$, $I_{\{\psi \leq -1\}}$ and $I_{\{\psi =0\}}$, obtaining respectively $F_{\varepsilon,1}$, $F_{\varepsilon,-1}$ and $F_{\varepsilon,0}$. The scaling parameter $\varepsilon$ is chosen small enough such that the supports of these functions do not meet. This is clearly possible. Indeed: suppose first that $d(\{\psi=1\},\{\psi =0\})=0$. Then we might find $(z_n,y_n)\in\{\psi=1\}\times\{\psi=0\}$ such that $d(z_n,y_n)\to 0$, namely $p_k(z_n-y_n)\to 0$ for any $k\in\N$. But on the other hand, for some $j\in\N$ and $c_j>0$
\begin{equation*}
   1= \abs{\langle\psi,z_n\rangle - \langle\psi,y_n\rangle} \leq c_j p_j(z_n-y_n)\to 0 
\end{equation*}
and thus $d(\{\psi=1\},\{\psi =0\})>0$. Since $\Supp F_{\varepsilon,1} = \operatorname{cl}(\{\psi\geq 1\}_{\varepsilon})$ and $\Supp F_{\varepsilon,0} = \operatorname{cl}(\{\psi=0\}_{\varepsilon})$ (for an arbitrary subset $Y$, $Y_\varepsilon$ denotes its $\varepsilon$-neighborhood), for $4\varepsilon < d(\{\psi=1\},\{\psi =0\})$ we obtain that the supports do not meet. The same holds for the other cases.

Define
\begin{equation*}
    \sigma(x):= F_{\varepsilon,1}(x)u_+ + F_{\varepsilon,-1}(x)u_-  + F_{\varepsilon,0}(x)u_0, \quad x\in\X.
\end{equation*}
Then $\sigma$ is (Lipschitz)-continuous and von Neumann-bounded, because for any $k\in\N$ and $x\in\X$ we clearly have
\begin{equation*}
    p_k(\sigma(x)) \leq p_k(u_+) + p_k(u_-) + p_k(u_0),
\end{equation*}
and the condition \eqref{eq: abstract condition on sigma} is satisfied.
\end{example}

The following theorem shows that a function $\sigma$ that satisfies Definition~\ref{sigmoid} is discriminatory, from which the density of $\mathfrak N(\sigma)$ follows by Theorem~\ref{prop: density}.
\begin{theorem}\label{prop: discriminatory condition is satisfied}
Let $\X$ be a real Fr\'echet space. Let $\sigma:\X\to\X$ be continuous, von Neumann-bounded and satisfying the separating property in Definition~\ref{sigmoid} above. Assume that for a given compact subset $K\subset \X$ and a given regular Borel measure $\mu$ on $K$ it holds 
\begin{equation*}
    \int_K \langle \ell,\sigma(Ax+b)\rangle\, \mu(dx) = 0
\end{equation*}
for all $\ell\in\X',A\in\mathcal L(\X),b\in\X$. Then $\mu=0$.
\end{theorem}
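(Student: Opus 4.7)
My strategy mirrors Cybenko's argument \cite{Cybenko1989} adapted to the Fr\'echet setting: exploiting the freedom in choosing $A$ and $b$, together with a scaling parameter $\lambda\to\infty$, I drive the integrand to a combination of indicators of an arbitrary affine half-space of $\X$, and thereby conclude that $\mu$ annihilates every such half-space. Fix $\psi^*\in\X'$ and $t\in\R$, and let $\psi, u_+, u_-, u_0$ be as in the Separating property. Since $\psi\neq 0$, pick $v\in\X$ with $\langle\psi,v\rangle=1$ and set
\begin{equation*}
Ax := \langle\psi^*,x\rangle\,v,\qquad b := -tv,
\end{equation*}
so that $A\in\mathcal{L}(\X)$, $b\in\X$, and $\lambda(Ax+b) = \lambda(\langle\psi^*,x\rangle-t)v$ for every $\lambda>0$. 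Since $\langle\psi,v\rangle=1$, this vector lies in $\Psi_+$ when $\langle\psi^*,x\rangle>t$, in $\Psi_-$ when $\langle\psi^*,x\rangle<t$, and equals $0\in\Psi_0$ when $\langle\psi^*,x\rangle=t$.

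Applying the hypothesis to $\lambda A\in\mathcal{L}(\X)$ and $\lambda b\in\X$ yields, for every $\lambda>0$,
\begin{equation*}
\int_K\langle\ell,\sigma(\lambda(Ax+b))\rangle\,\mu(dx)=0.
\end{equation*}
The Separating property forces the integrand to converge pointwise on $K$, as $\lambda\to\infty$, to
\begin{equation*}
\langle\ell,u_+\rangle\,I_{E_+}(x)+\langle\ell,u_-\rangle\,I_{E_-}(x)+\langle\ell,u_0\rangle\,I_{E_0}(x),
\end{equation*}
where $E_\pm := \{x\in K:\pm(\langle\psi^*,x\rangle-t)>0\}$ and $E_0:=\{x\in K:\langle\psi^*,x\rangle=t\}$. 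Combining von Neumann-boundedness of $\sigma(\X)$ with the continuity of $\ell$ gives the uniform bound $|\langle\ell,\sigma(y)\rangle|\leq C(\ell,\sigma)$ for all $y\in\X$ (as noted just before the statement), so dominated convergence produces
\begin{equation*}
\langle\ell,u_+\rangle\,\mu(E_+)+\langle\ell,u_-\rangle\,\mu(E_-)+\langle\ell,u_0\rangle\,\mu(E_0)=0,\qquad \forall\,\ell\in\X'.
\end{equation*}
Assuming without loss of generality $u_+\notin\Span\{u_0,u_-\}$ (the alternative case is symmetric upon replacing $\psi^*$ by $-\psi^*$), the subspace $\Span\{u_0,u_-\}$ is finite-dimensional hence closed, so Hahn--Banach supplies $\ell\in\X'$ with $\ell(u_+)=1$ and $\ell(u_0)=\ell(u_-)=0$. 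This forces $\mu(E_+)=0$, and since $\psi^*\in\X'$ and $t\in\R$ were arbitrary, $\mu$ annihilates every open affine half-space of $\X$.

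To complete the proof I argue in a Cram\'er--Wold style. For each $\psi^*\in\X'$, the pushforward $\nu:=(\psi^*)_\ast\mu$ is a regular Borel measure on $\R$ satisfying $\nu((t,\infty))=0$ for every $t\in\R$; taking differences gives $\nu((s,t])=0$ for all $s<t$, so $\nu\equiv 0$. In particular $\int_K e^{i\langle\psi^*,x\rangle}\,\mu(dx)=0$ for every $\psi^*\in\X'$. The linear span of $\{e^{i\langle\psi,\cdot\rangle}:\psi\in\X'\}$ is a self-conjugate unital subalgebra of $C(K;\C)$ that separates the points of $K$ (via Hahn--Banach on $\X$), hence by the complex Stone--Weierstrass theorem it is dense in $C(K;\C)$. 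Therefore $\int_K f\,d\mu=0$ for every $f\in C(K;\C)$, which yields $\mu=0$.

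The main delicate point is the passage to the limit under the integral sign: I have to convert the qualitative hypothesis that $\sigma(\X)$ is von Neumann-bounded plus the continuity of $\ell$ into a genuine uniform numerical bound on $|\langle\ell,\sigma(y)\rangle|$ valid for every $y\in\X$, and so dominating the integrand simultaneously in $\lambda$, $A$, $b$ and $x\in K$. This is the one step where von Neumann-boundedness is indispensable, and without it the whole scheme would collapse.
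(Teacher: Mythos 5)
Your proof is correct, but it departs from the paper's argument in two substantive ways, both of which simplify the structure. First, by taking the rank-one operator $Ax=\langle\psi^*,x\rangle v$ with $\langle\psi,v\rangle=1$ you get $\psi\circ A=\psi^*$ for free, so you never need the paper's Lemma~\ref{lemma: replication of linear functionals} (solvability of $\gamma=\psi\circ A$), which is proved there via topological complements and projection operators, nor the auxiliary Lemma~\ref{lemma: trivial lemma}; your construction is in effect a one-line proof of that replication lemma. Second, and more significantly, the endgame differs: both arguments arrive at $\mu\bigl[K\cap(\psi^*)^{-1}(t,\infty)\bigr]=0$ for all $\psi^*\in\X'$ and $t\in\R$, i.e.\ that $\mu$ vanishes on the cylindrical $\sigma$-algebra $\sigma(\X')\cap K$, but the paper then extends $\mu$ trivially to $\X$, verifies it is Radon, and invokes Bogachev's approximation of arbitrary Borel sets by cylindrical sets (Prop.~7.12.1) to pass to all of $\mathcal{B}(K)$; you instead run a Cram\'er--Wold argument on the pushforwards $(\psi^*)_*\mu$ to kill the characteristic functionals $\int_K e^{i\langle\psi^*,x\rangle}\mu(dx)$ and then apply complex Stone--Weierstrass on the compact metric space $K$ (the exponentials form a self-adjoint unital point-separating algebra, with the point-separation requiring a small rescaling of $\psi$ to avoid $2\pi\Z$) to conclude $\int_K f\,d\mu=0$ for all $f\in C(K;\C)$, whence $\mu=0$ by regularity. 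Your route is more self-contained — it needs only classical tools on the compact set $K$ rather than the theory of Radon measures on locally convex spaces — while the paper's route establishes the intermediate fact that $\mu_{ext}$ vanishes on $\sigma(\X')$ in a form it can reuse verbatim in the multilayer setting of Proposition~\ref{prop:density:multilayer}. Two minor points you should make explicit if you write this up: the passage from $\nu((t,\infty))=0$ for all $t$ to $\nu\equiv 0$ uses the $\pi$--$\lambda$ theorem applied to a decomposition $\nu=\nu_1-\nu_2$ into finite positive measures, and the dominated convergence step should likewise be applied to the two Hahn--Jordan components of $\mu$ separately, as the paper does.
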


Before we can prove Theorem~\ref{prop: discriminatory condition is satisfied} we need two preparatory lemmas. 
\begin{lemma}\label{lemma: trivial lemma}
Given $\phi,\psi \in \X'\setminus \{0\}$ there exists $z\in \X$ such that $\phi(z)=1$ and $\psi(z)\neq 0$.
\end{lemma}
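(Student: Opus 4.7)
My plan is to split into two cases according to whether $\phi$ and $\psi$ are linearly dependent or not.

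First, if $\psi = c\phi$ for some $c\in\mathbb F\setminus\{0\}$, then I just pick any $z\in\X$ with $\phi(z)=1$ (which exists because $\phi\neq 0$). Automatically $\psi(z)=c\neq 0$, so we are done in this trivial case.

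Second, suppose $\phi$ and $\psi$ are linearly independent. I will use the standard linear-algebraic fact that for linear functionals on a vector space, $\ker(\phi)\subset\ker(\psi)$ forces $\psi$ to be a scalar multiple of $\phi$. Hence there exists $y\in\ker(\phi)$ with $\psi(y)\neq 0$. On the other hand, since $\phi\neq 0$, we can pick $x\in\X$ with $\phi(x)=1$. If already $\psi(x)\neq 0$, take $z=x$ and we are done. Otherwise, set $z:= x+y$; then $\phi(z)=\phi(x)+\phi(y)=1+0=1$, while $\psi(z)=\psi(x)+\psi(y)=0+\psi(y)\neq 0$.

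The argument is essentially elementary linear algebra — no topology or completeness of $\X$ is actually used (continuity of $\phi,\psi$ plays no role), so there is no real obstacle. The only point to double-check is the implication $\ker(\phi)\subset\ker(\psi)\Rightarrow \psi\in\Span\{\phi\}$, which follows by factoring $\psi$ through the one-dimensional quotient $\X/\ker(\phi)$.
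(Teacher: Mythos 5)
Your proof is correct, but it takes a genuinely different route from the paper's. The paper argues topologically: it first shows that the set $\Phi_+\cup\Phi_-=\{x\in\X;\ \phi(x)\neq 0\}$ is dense in $\X$ (by perturbing any point of $\ker(\phi)$ with $n^{-1}u$ where $\phi(u)=1$), and then invokes the continuity of $\psi$ to conclude that $\psi$ cannot vanish on a dense set without vanishing identically; normalizing the resulting $w$ by $\phi(w)$ finishes the argument. You instead give a purely algebraic argument: either $\psi$ is a nonzero multiple of $\phi$ (trivial case), or $\ker(\phi)\not\subset\ker(\psi)$ by the standard fact that $\ker(\phi)\subset\ker(\psi)$ forces $\psi\in\Span\{\phi\}$ (via the one-dimensional quotient $\X/\ker(\phi)$), and then $z=x$ or $z=x+y$ does the job. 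Your version is more elementary and strictly more general --- it needs no topology on $\X$ and applies to arbitrary (not necessarily continuous) linear functionals on any vector space --- while the paper's version is shorter to state given that continuity of elements of $\X'$ is already in hand and it echoes the hyperplane/density theme used elsewhere in the paper (e.g.\ in the proof of Theorem~\ref{prop: discriminatory condition is satisfied}). Both are complete; the only step in yours worth making fully explicit, as you note, is the kernel-inclusion criterion, which is routine.
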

\begin{proof}
Linearity of $\phi$ implies that the set $\Phi_+\cup \Phi_-$, where $\Phi_+ =\{ x\in\X; \langle\phi,x\rangle >0 \}$ and $\Phi_- =\{ x\in\X; \langle\phi,x\rangle <0 \}$, is actually dense. To see this, we need to show that each $x\in\Phi_0=\ker(\phi)$ can be approximated with a sequence in $\Phi_+\cup \Phi_-$. Consider $u_n= n^{-1}u\in\X$ with some $u\in \X$ such that $\phi(u)=1$ and define $x_n=x+u_n$. Then clearly $x_n \in \Phi_+$ and $x_n\to x$ and hence we get that $\text{cl}(\Phi_+\cup \Phi_-)=\X$.  Suppose that $\psi$ vanishes on the set $\Phi_+\cup \Phi_-$. 
Again by continuity of $\psi$ we would get $\psi=0$ identically. Therefore, there must exist $w\in \Phi_+\cup \Phi_-$ such that $\psi(w)\neq 0$. The element $z=w/\phi(w)$ does the job.
\end{proof}
The next lemma is crucial for the proof of Theorem \ref{prop: discriminatory condition is satisfied} as it allows us to rotate, shift and project to all possible half-spaces and show that the measures on them is zero if certain conditions are satisfied. 
\begin{lemma}\label{lemma: replication of linear functionals}
Let $\X$ be a real Fr\'echet space. Let $\psi\in \X'$ be not identically zero. Then, for arbitrary $\gamma\in \X'$, the equation
\begin{equation*}
    \gamma = \psi \circ A
\end{equation*}
is solvable for some $A\in\mathcal{L}(\X)$.
\end{lemma}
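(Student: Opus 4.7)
The goal is to construct a continuous linear operator $A \in \mathcal{L}(\X)$ with $\psi \circ A = \gamma$. My strategy is to build $A$ as a rank-one operator of the form $A(x) = \gamma(x)\, u$ for a suitably chosen fixed vector $u \in \X$. The identity $\psi(A(x)) = \gamma(x)\,\psi(u)$ then forces the natural normalization $\psi(u) = 1$.

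\textbf{Step 1: choose the vector $u$.} Since $\psi \in \X'$ is not identically zero, I pick any $v \in \X$ with $\psi(v) \neq 0$ and set $u := v/\psi(v)$, so that $\psi(u) = 1$. (If $\gamma = 0$ the trivial choice $A = 0$ works, so I may as well assume $\gamma \neq 0$, although this distinction is not necessary.)

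\textbf{Step 2: define $A$ and check the required properties.} Define
\begin{equation*}
    A : \X \to \X, \qquad A(x) := \gamma(x)\, u.
\end{equation*}
Linearity of $A$ follows immediately from the linearity of $\gamma$. For continuity, observe that $A$ factors as the composition of the continuous linear functional $\gamma : \X \to \R$ and the continuous map $\R \to \X$, $\lambda \mapsto \lambda u$, which is continuous because scalar multiplication is jointly continuous in any topological vector space. Hence $A \in \mathcal{L}(\X)$. Finally, for every $x \in \X$,
\begin{equation*}
    (\psi \circ A)(x) = \psi(\gamma(x)\, u) = \gamma(x)\,\psi(u) = \gamma(x),
\end{equation*}
so $\psi \circ A = \gamma$, as required.

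\textbf{Expected difficulty.} There really is no obstacle here: the statement is a soft structural fact and the rank-one construction settles it in one line once a vector with $\psi(u) = 1$ is fixed. The only point that might warrant a sentence is the verification that $\lambda \mapsto \lambda u$ is continuous from $\R$ to $\X$, which is a standard property of topological vector spaces and therefore does not require any special feature of the Fréchet topology.
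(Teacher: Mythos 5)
Your proof is correct, and it is genuinely simpler than the one in the paper. You construct $A$ as the rank-one operator $A(x)=\gamma(x)\,u$ with $\psi(u)=1$, which immediately gives $\psi\circ A=\gamma$; continuity follows from the continuity of $\gamma$ and of scalar multiplication, so no special feature of the Fr\'echet topology is used. The paper instead builds $A$ out of the topological decompositions $\X=\Gamma_0\oplus\langle z\rangle=\Psi_0\oplus\langle w\rangle$: it first invokes a preparatory lemma to find $z$ with $\gamma(z)=1$ and $\psi(z)\neq 0$, cites Schaefer to get that the closed codimension-one kernels are topologically complemented, defines the four associated continuous projections, and sets $A_0=\Pi_{\Psi_0}\circ\Pi_{\Gamma_0}+\Pi_{\langle w\rangle}\circ\Pi_{\langle z\rangle}$ before normalizing by $\psi(z)^{-1}$. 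The resulting operator has large range rather than rank one, but nothing in the application (the proof of the discriminatory-condition theorem) requires more than the bare existence of some $A\in\mathcal{L}(\X)$ with $\psi\circ A=\gamma$, so your construction suffices and dispenses with both the auxiliary lemma and the complementation machinery. In short: same statement, strictly lighter proof on your side; the paper's route produces a structurally richer operator that happens not to be needed.
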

\begin{proof}
For arbitrary $\phi\in\X', t\in\R$ we write $\phi_t:=\{x\in\X; \langle \phi,x\rangle = t\}$. Clearly, we can assume $\gamma$ not identically zero, otherwise the problem is trivial. Therefore, let $z\in\X$ be such that $\langle \gamma,z\rangle =1$ and $\langle \psi,z\rangle \neq 0$. Clearly, such $z$ exists in view of Lemma~\ref{lemma: trivial lemma} above. Moreover, let $w\in \X$ such that $\langle \psi, w\rangle=1$. 

Let $\Psi_0=\ker(\psi)$ and $\Gamma_0=\ker(\gamma)$. We observe that
\begin{equation}
    \X = \Gamma_0 + \langle z \rangle = \Psi_0 + \langle w \rangle
\end{equation}
where $\langle z \rangle = \{sz; s\in\R\}\subset \X$ and  $\langle w \rangle = \{sw; s\in\R\}\subset \X$. Furthermore, $\Gamma_0\cap \langle z \rangle = \{0\}$ and $\Psi_0\cap \langle w \rangle = \{0\}$, namely $\Gamma_0$ and $\langle z \rangle$, are algebraic complements. The same holds for $\Psi_0$ and $\langle w \rangle$. Furthermore, $\Gamma_0$ and $\Psi_0$ are closed by continuity, and have codimension one. By Schaefer \cite[Prop. 3.5., page 22]{schaefer1971topological}, it follows that $\Gamma_0$ and $\langle z \rangle$ (respectively, $\Psi_0$ and $\langle w \rangle$) are also topologically complemented.

Therefore, any $x\in\X$ may be written in a unique way as 
\begin{equation*}
    x = x_{\Gamma_0} + \gamma(x)z = x_{\Psi_0} + \psi(x)w,
\end{equation*}
where $x_{\Gamma_0}\in \Gamma_0, x_{\Psi_0}\in \Psi_0$. 
We can therefore define the following projections operators:
\begin{equation*}
    \Pi_{\Gamma_0}:\X\to \Gamma_0, \quad x \mapsto x_{\Gamma_0},
\end{equation*}
\begin{equation*}
    \Pi_{\langle z \rangle}:\X\to \langle z \rangle\, \quad x \mapsto \gamma(x)z,
\end{equation*}
\begin{equation*}
    \Pi_{\Psi_0}:\X\to \Psi_0, \quad x \mapsto x_{\Psi_0},
\end{equation*}
\begin{equation*}
    \Pi_{\langle w \rangle}:\X\to \langle w \rangle\, \quad x \mapsto \psi(x)w.
\end{equation*}
Since $\psi$, $\gamma$ and the identity operator are continuous, it follows that  $\Pi_{\Psi_0}(x)=x-\psi(x)w$,  $\Pi_{\Gamma_0}(x)=x-\gamma(x)z$, $\Pi_{\langle z \rangle}$ and $\Pi_{\langle w \rangle}$ are in $\mathcal{L}(\X)$.
Define $A_0:= \Pi_{\Psi_0}\circ \Pi_{\Gamma_0} + \Pi_{\langle w \rangle }\circ \Pi_{\langle z \rangle}\in \mathcal{L}(\X)$. Let $x\in \X$ arbitrary, and write it as $x=  x_{\Gamma_0} + \gamma(x)z$. Write $z= z_{\Psi_0}+\psi(z)w$. Then,
\begin{equation*}
    A_0x = \Pi_{\Psi_0}x_{\Gamma_0} + \gamma(x)\Pi_{\langle w \rangle} z = \Pi_{\Psi_0}x_{\Gamma_0} + \gamma(x)\psi(z) w, 
\end{equation*}
and
\begin{equation*}
    \psi(A_0x) = \gamma(x)\psi(z) \psi(w) = \gamma(x)\psi(z), \quad x\in\X.
\end{equation*}
But $\psi(z)\neq 0$, and thus $A:= \psi(z)^{-1}A_0\in\mathcal{L}(\X)$ does the job.
\end{proof}
We are now ready to prove Theorem \ref{prop: discriminatory condition is satisfied}:

\begin{proof}[Proof of Theorem~\ref{prop: discriminatory condition is satisfied}]
Consider $\lambda>0$. Then for any $\ell\in\X',A\in\mathcal L(\X),b\in\X$ it holds 
\begin{equation*}
    \int_K \langle \ell,\sigma(\lambda (Ax+b))\rangle\, \mu(dx) = 0.
\end{equation*}
Observe that, as $\lambda\to \infty$, pointwise in $x\in\X$,
\begin{equation*}
     \langle \ell,\sigma(\lambda (Ax+b))\rangle \to
     \begin{cases}
     \langle \ell, u_+\rangle, \text{ if } Ax+b \in \Psi_+\\
      \langle \ell, u_-\rangle, \text{ if } Ax+b \in \Psi_-\\
       \langle \ell, u_0\rangle, \text{ if } Ax+b \in \Psi_0\\
     \end{cases}
\end{equation*}
Since, $\sigma$ is von Neumann-bounded, then there exists a constant $ C(\ell,\sigma)$ such that
\begin{equation*}
    \abs{\langle \ell,\sigma(\lambda (Ax+b))\rangle} \leq C(\ell,\sigma),
\end{equation*}
uniformly in $\lambda$ and $x$. By the Hahn-Jordan decomposition (see Bogachev \cite[Thm. 3.1.1., Cor. 3.1.2]{bogachev2007measureVol1}), we can write the measure $\mu=\mu_1-\mu_2$ for two positive measures $\mu_1,\mu_2$ on $K$.
This implies that $$\int_K \langle \ell,\sigma(\lambda (Ax+b))\rangle \mu (dx)=\int_K \langle \ell,\sigma(\lambda (Ax+b))\rangle \mu_1 (dx)-\int_K \langle \ell,\sigma(\lambda (Ax+b))\rangle \mu_2 (dx)$$
 
 Since we are integrating on the compact set $K$, and $\mu$ is a regular Borel measure, constants are integrable with respect to $\mu$ on $K$. The same holds then for $\mu_1$ and $\mu_2$. 
 
 Therefore, by Lebesgue's dominated convergence theorem applied to each integrand above, it follows that
\begin{equation}\label{eqn:zero:split:int}
     \langle \ell, u_+\rangle \mu[K\cap A^{-1}(\Psi_+ -b)] +
      \langle \ell, u_-\rangle \mu[K\cap A^{-1}(\Psi_- -b)] +
       \langle \ell, u_0\rangle \mu[K\cap A^{-1}(\Psi_0 -b) ] = 0
\end{equation}
for any $\ell\in\X',A\in\mathcal L(\X),b\in\X$. 

Let us first assume that $u_+ \notin \Span \{u_0,u_-\}$. Then by the Hahn-Banach theorem (see e.g. Conway \cite[Chap IV, Cor. 3.15]{conway2010}) we can choose $\ell\in\mathfrak X'$ such that $\langle \ell, u_+\rangle=1$ and  $\langle \ell, u_-\rangle=\langle \ell, u_0\rangle=0$. This leads us to conclude from (\ref{eqn:zero:split:int}) that $$\mu[K\cap A^{-1}(\Psi_+ -b)]=0$$
for all $A\in\mathcal L(\X),b\in\X$. Let now $t\in\R$ and $b\in\X$ such that $t=\psi(-b)$. Then, it is immediate to see that $$\Psi_+ -b=\psi^{-1}(t,\infty)$$ and thus $$\mu[K\cap (\psi\circ A)^{-1}(t,\infty)]=0$$
for each $t\in\R$ and $A\in\mathcal L(\X)$. By Lemma \ref{lemma: replication of linear functionals}, we therefore deduce that
\begin{equation}\label{eqn:right:interval}
\mu[K\cap \gamma^{-1}(t,\infty)]=0 
\end{equation}
for each $t\in\R$ and $\gamma\in \X'$. In the case that $u_- \notin \Span \{u_0,u_+ \}$ instead, a similar line of reasoning leads to conclude that 
\begin{equation}\label{eqn:left:interval}
\mu[K\cap \gamma^{-1}(-\infty,t)]=0.
\end{equation}
Observe in particular that $\mu(K)=0$. For the sake of convenience, we trivially extend $\mu$ to the whole $\X$, namely
\begin{equation*}
    \mu_{ext}(E) := \mu(K\cap E), \quad E\in\mathcal{B}(\X)
\end{equation*}
and notice that $\abs{\mu_{ext}}(\X)=\abs{\mu}(K)<\infty$, where 
$\abs{\mu_{ext}}= \mu_{ext,1} + \mu_{ext,2}$, and $\mu_{ext} = \mu_{ext,1} - \mu_{ext,2}$ is the Hahn-Jordan decomposition for the extended measure ($\mu_{ext,1}$ and $\mu_{ext,2}$ are positive finite measures on $\mathcal{B}(\X)$). Clearly, then it follows from $\mu(K)=0$ that  $\mu_{ext}(\X)=0$. Recall also that $\mathcal{B}(K)=\mathcal{B}(\X)\cap K$. 

Because $\mu$ is regular Borel measure, it follows in particular that for every $E\subset K$ and $\varepsilon >0$, there exists compact $K_{\varepsilon}\subset K$ such that $\abs{\mu}(E\setminus K_{\varepsilon})<\varepsilon$. This property extends to $E\in \X$ for $\mu_{ext}$ as we may use that $\abs{\mu_{ext}}(\cdot)=\abs{\mu}(\cdot\cap K)$ and choose $K_{\varepsilon} \subset E\cap K$ such that $\abs{\mu}((E\cap K)\setminus K_{\varepsilon})< \varepsilon $ and it follows that $\abs{\mu_{ext}}(E\setminus K_{\varepsilon})=\abs{\mu_{ext}}((E\cap K) \setminus K_{\varepsilon}) + \abs{\mu_{ext}}(E\cap K^c)=\abs{\mu}((E\cap K) \setminus K_{\varepsilon})<\varepsilon$. This shows that $\mu_{ext}$ is a Radon measure in the sense of \cite[Def. 7.1.1]{bogachev2007measure}. 

Moreover, \eqref{eqn:right:interval} or \eqref{eqn:left:interval} is now telling us that 
$
\mu_{ext} = 0$ on  $\sigma(\X')\subset\mathcal{B}(\X)$, 
the sigma-algebra generated by all the elements of $\X'$. We want to show that actually $\mu_{ext}=0$ on $\mathcal{B}(\X)$ as well. We argue by contradiction and assume there exists $E\in\mathcal{B}(\X)$ such that $\mu_{ext}(E)\neq 0$. In virtue of Bogachev \cite[Prop. 7.12.1]{bogachev2007measure} we may find $B\in\sigma(\X')$ such that
$$
\abs{\mu_{ext}}(E\Delta B) = 0,
$$
namely
$$
\mu_{ext,i}(E\Delta B) = 0, \quad i=1,2.
$$
Since $E\Delta B = (E\cup B)\setminus (E\cap B)$ and $\mu_{ext,i}$ are positive finite measures, we infer 
$$
\mu_{ext,i}(E\cup B)=\mu_{ext,i}(E\cap B), \quad i=1,2,
$$
which implies, $i=1,2$, 
$$
\begin{cases}
\mu_{ext,i}(E)\leq \mu_{ext,i}(E\cup B) = \mu_{ext,i}(E\cap B)\leq \mu_{ext,i}(E)\\
\mu_{ext,i}(B)\leq \mu_{ext,i}(E\cup B) = \mu_{ext,i}(E\cap B)\leq \mu_{ext,i}(B)
\end{cases}
$$
and finally $\mu_{ext,i}(E)=\mu_{ext,i}(B)$ for $i=1,2$. Therefore,
$$
0\neq \mu_{ext}(E) = \mu_{ext_1}(E)-\mu_{ext,2}(E) = \mu_{ext_1}(B)-\mu_{ext,2}(B) = \mu_{ext}(B)
$$
and at the same time $\mu_{ext}(B)=0$, because $B\in\sigma(\X')$. Thus, it must hold $\mu_{ext}=0$ on $\mathcal{B}(\X)$, and hence, $\mu=0$ on $\mathcal{B}(K)$, which concludes the proof.

\end{proof}

\subsection{Additional examples of functions with Separating property}\label{additional:examples}
We now provide a few more examples of function that satisfy the Separating property Definition~\ref{eq: abstract condition on sigma}. The first example resembles the well known rectified linear activation function (ReLU).
\begin{example}\label{example: identity map}
We consider the following example: let $(\X,\norm{\cdot})$ be a real Banach space now. Consider $\psi\in\X'$ with $\norm{\psi}=1$ (the dual norm). For $R>0$, let $B_R$ denote the open ball of radius $R$ around the origin. First of all we notice that 
\[
d(\operatorname{cl}(B_{R+1});\{x\in\X; \abs{\langle\psi,x\rangle}\geq R+2\}) \geq 1.
\]
Indeed, given $y:\norm{y}\leq R+1$ and $x: \abs{\langle\psi,x\rangle}\geq R+2$, it follows that $\norm{x}\geq R+2$ and thus
\[
\norm{y-x}\geq \abs{\norm{y}-\norm{x}}\geq 1.
\]
In particular these sets are disjoints.

Set $F_0:=\X\setminus B_{R+1}$ and $F_1:=\operatorname{cl}(B_R)$: these closed sets are disjoint. Since we are in a normal space, Urysohn's lemma ensures that there exists $\mathcal{U}:\X\to [0,1]$ continuous such that 
\[
\mathcal U\big|_{F_1}=1, \quad \mathcal U\big|_{F_0}=0.
\]
In particular, since $\{x\in\X;\abs{\langle\psi,x\rangle}\geq R+2\}\subset \X\setminus cl(B_{R+1})\subset F_0$, $\mathcal U = 0 $ on $\{x\in\X:\abs{\langle\psi,x\rangle}\geq R+2\}$.

Let $I_\geq$ and $I_\leq$ be the indicator functions of the sets $\{x\in\X: \langle\psi,x\rangle\geq R+2\}$ and $\{x\in\X:\langle\psi,x\rangle\leq -R-2\}$ respectively. And let $I_\geq^\varepsilon$ and $I_\leq^\varepsilon$ be their Lipschitz approximations, as in Example~\ref{ex: abstract condition on sigma}. Since, with the same notation as above, it holds
\[
\Supp I_\geq^\varepsilon = \operatorname{cl}(\{\psi\geq R+2\}_{\varepsilon}),\quad
\Supp I_\leq^\varepsilon = \operatorname{cl}(\{\psi\leq -R-2\}_{\varepsilon}),
\]
elementary computations show that
\[
\Supp I_\geq^\varepsilon \subset \X\setminus B_{R+2-\varepsilon},
\quad
\Supp I_\leq^\varepsilon \subset \X\setminus B_{R+2-\varepsilon}
\]
and thus for $\varepsilon<1$
\[
\Supp I_\geq^\varepsilon \cap \operatorname{cl}(B_{R+1})=\emptyset, \quad
\Supp I_\leq^\varepsilon \cap \operatorname{cl}(B_{R+1})=\emptyset.
\]
We can also easily get that 
\[
\Supp I_\geq^\varepsilon \subset \{\psi\geq R+2-\varepsilon\},\quad
\Supp I_\leq^\varepsilon \subset \{\psi\leq -R-2+\varepsilon\},
\]
showing that $\Supp I_\geq^\varepsilon \cap \Supp I_\leq^\varepsilon =\emptyset$.

We choose linearly independent vectors $u_\geq$ and $u_\leq$ and define
\[
\sigma(x) := \mathcal{U}(x)\,x + I_\geq^\varepsilon(x)\,u_\geq +  I_\leq^\varepsilon(x)\,u_\leq, \quad x\in \X.
\]
Then $\sigma\in C(\X;\X)$, and it is bounded because
\[
\norm{\sigma(x)} \leq R+1 + \norm{u_\geq} + \norm{u_\leq},\quad x\in\X.
\]
Clearly, $\sigma(x)=x$ if $\norm{x}\leq R$. 

Moreover, for $x\in\X$ such that $\langle \psi,x\rangle>0$, then for all $\lambda\geq \langle \psi,x\rangle^{-1}(R+2)$ we have $\sigma(\lambda x)=u_\geq$. Similarly, for $x\in\X$ such that $\langle \psi,x\rangle<0$, then for all $\lambda\geq \langle \psi,x\rangle^{-1}(-R-2)$ we have $\sigma(\lambda x)=u_\leq$. Finally, if $\langle\psi,x\rangle =0$, then for any $\lambda>0$ we have $\langle\psi,\lambda x\rangle=0$. Thus $\lambda x \notin \Supp I_\geq^\varepsilon \cup \Supp I_\leq^\varepsilon$ and so
\[
\sigma(\lambda x) = \mathcal{U}(\lambda x) \lambda x.
\]
If $x=0$, then $\sigma(\lambda x) = \sigma(0)=0$. If $x\neq 0$, then for all $\lambda$ larger than $\norm{x}^{-1}(R+1)$ it holds $\sigma(\lambda x)=0$.

This shows that $\sigma$ satisfies \eqref{eq: abstract condition on sigma}.
\end{example}

\begin{example}\label{ex: some concrete examples}
Let us give some further concrete applications of our abstract framework. Let now for the sake of simplicity $\X$ be a real separable Hilbert space  with inner product denoted by $\langle\cdot,\cdot\rangle$ and corresponding norm by $\Vert\cdot\Vert$. Further, we denote by $(e_k)_k$ an orthonormal basis for $\X$. Any $x\in\X$ may be uniquely written as $x=\sum_{k\in\N}x_ke_k$, where $x_k=\langle e_k,x\rangle$.

Consider $\beta_i\in C(\R;\R),i=1,2,3$ such that 
\begin{equation*}
    \begin{cases}
    \lim_{\xi\to\infty} \beta_1(\xi)=1,\;\lim_{\xi\to-\infty} \beta_1(\xi)=-1, \;\beta_1(0)=0,\\
    \lim_{\xi\to\infty} \beta_2(\xi)=1,\;\lim_{\xi\to-\infty} \beta_2(\xi)=1, \;\beta_2(0)=1,\\
    \lim_{\xi\to\infty} \beta_3(\xi)=-1,\;\lim_{\xi\to-\infty} \beta_3(\xi)=2, \;\beta_3(0)=0,
    \end{cases}
\end{equation*}
and define
\begin{equation*}
    \sigma(x)=\beta_1(x_1)e_1 + \beta_2(x_2)e_2 + \beta_3(x_1)e_3,\quad x\in\X.
\end{equation*}
Evidently, $\sigma\in C(\X;\X)$; besides, since $\norm{\sigma(x)}^2=\beta_1^2(x_1)+\beta_2(x_2)^2+\beta_3(x_1)^2$, it holds $\sup_{x}\norm{\sigma(x)}<\infty$, because $\beta_1,\beta_2$ and $\beta_3$ are bounded. Thus $\sigma$ is von Neumann-bounded. Consider now the linear bounded functional
\begin{equation*}
    \psi(x) := \langle e_1,x\rangle = x_1, \quad x\in\X.
\end{equation*}
Clearly, $\Psi_+=\{x\in\X;x_1>0\},\Psi_-=\{x\in\X;x_1<0\}$ and $\Psi_0=\{x\in\X;x_1=0\}$ and, as $\lambda\to\infty$ 
\begin{equation*}
\sigma(\lambda x) \to
\begin{cases}
 e_1+e_2-e_3, \text{ if } x\in \Psi_+\\
 -e_1+e_2+2e_3, \text{ if } x\in \Psi_-\\
 e_2, \text{ if } x\in \Psi_0
\end{cases}
\end{equation*}
which are linearly independent. We can therefore apply our results to infer that  $\mathfrak N(\sigma)$ is dense in $C(\X;\R)$ with respect to the topology of uniform convergence on the compact subsets of $\X$.

We can even go further. By the comment after Definition~\ref{sigmoid} indeed it is enough to consider a function $\beta\in C(\R;\R)$ such that 
\begin{equation*}
     \lim_{\xi\to\infty} \beta(\xi)=1,\;\lim_{\xi\to-\infty} \beta(\xi)=0, \;\beta(0)=0,
\end{equation*}
and arbitrary $z \in\X$ in order to define
\begin{equation*}
    \sigma(x) = \beta(\psi(x)) z=\beta(x_1) z,\quad x\in\X
\end{equation*}
which still enables us to conclude that $\mathfrak N(\sigma)$ is dense in $C(\X;\R)$. Example~\ref{ex: some concrete examples 2} below extends this example for more general choices of $\psi$. A natural question now would be to find ``optimal'' $\beta$ and $z$ such that the convergence of the approximation to the function we want to learn is ``fast''. 
\end{example}

\begin{example}\label{ex: infinite sum of activation functions}
The above example can be extended to an activation function that operates on infinitely many different directions $z_j\in\X$. More precisely, let now $\X$ be a real Banach space with norm denoted by $\norm{\cdot}$. As above, we consider an arbitrary $\psi\in\X'\setminus\{0\}$. Moreover, suppose we have a sequence $(\beta_j)_{j\in\N}\subset C(\R;\R)$ such that 
\begin{equation*}
    \lim_{\xi\to\infty} \beta_j(\xi)=1,\;\lim_{\xi\to-\infty} \beta_j(\xi)=0, \;\beta_j(0)=0,\; j\in\N
\end{equation*}
and $\sup_j\norm{\beta_j}_\infty=:B<\infty$.

Let $(z_j)_{j\in\N}\subset\X$ be such that $Z:=\sum_{j=1}^\infty\norm{z_j}<\infty$. Set
\[
z:=\sum_{j=1}^\infty z_j\in\X
\]
and assume $z\neq 0$.

We show that the map $\X\ni x\mapsto \sigma(x):=\sum_{j=1}^\infty \beta_j(\psi(x))z_j$ is an activation function.
\begin{enumerate}
    \item Well-defined: since it holds
    \[
    \sum_{j=1}^\infty\norm{\beta_j(\psi(x))z_j} = \sum_{j=1}^\infty\abs{\beta_j(\psi(x))}\norm{z_j}\leq
    \sum_{j=1}^\infty\norm{\beta_j}_\infty \norm{z_j}\leq BZ
    \]
    we have absolute convergence and so $\sigma(x)$ is well-defined.
    \item Boundedness: $\norm{\sigma(x)}\leq \sum_{j=1}^\infty\norm{\beta_j(\psi(x))z_j}\leq BZ$ for any $x\in\X$.
    \item Continuity: we have
    \[
    \norm{\sigma(x) - \sum_{j=1}^N\beta_j(\psi(x))z_j } =
     \norm{\sum_{j=N+1}^\infty\beta_j(\psi(x))z_j } \leq B \sum_{j=N+1}^\infty\norm{z_j},
    \]
    and thus
    \[
    \sup_{x\in\X} \norm{\sigma(x) - \sum_{j=1}^N\beta_j(\psi(x))z_j }\leq B \sum_{j=N+1}^\infty\norm{z_j}\to 0
    \]
    as $N\to\infty$, namely the convergence is uniform. Since $x\mapsto \sum_{j=1}^N\beta_j(\psi(x))z_j$ is continuous, $\sigma$ must be continuous as well.
    \item Separating property: Let $\lambda>0$. Consider first $x\in \Psi_+$. From the computations just done, we have
    \begin{equation*}
        \begin{split}
             \norm{\sigma(\lambda x) - z} & \leq
             \norm{\sigma(\lambda x) - \sum_{j=1}^N\beta_j(\psi(\lambda x)) z_j  } + 
             \norm{\sum_{j=1}^N\beta_j(\psi(\lambda x)) z_j -z } \\
             &\leq B\sum_{j=N+1}^\infty\norm{z_j} + \norm{\sum_{j=1}^N\beta_j(\psi(\lambda x)) z_j -z }.
        \end{split}
    \end{equation*}
   Fix $\varepsilon>0$ and chose $N_\varepsilon\in\N$ such that if $N\geq N_\varepsilon$ it holds $\sum_{j=N+1}^\infty\norm{z_j}\leq \frac{\varepsilon}{B}$. For such $N$ we have:
   \[
   \norm{\sigma(\lambda x) - z} \leq \varepsilon +
              \norm{\sum_{j=1}^N\beta_j(\psi(\lambda x)) z_j -z } 
   \]
   and thus 
   \[
   \limsup_{\lambda\to\infty} \norm{\sigma(\lambda x) - z} \leq \varepsilon +
              \norm{\sum_{j=1}^N z_j -z } 
   \]
   because evidently as $\lambda\to\infty$
   \[
   \sum_{j=1}^N\beta_j(\psi(\lambda x)) z_j -z \stackrel{\X}{\to}  \sum_{j=1}^N z_j -z .
   \]
   Hence
   \[
   \limsup_{\lambda\to\infty} \norm{\sigma(\lambda x) - z} \leq \varepsilon +
              \norm{\sum_{j=N+1}^\infty z_j } \leq \varepsilon + \frac{\varepsilon}{B}
   \]
   and by the arbitrariness of $\varepsilon$
   \[
   \lim_{\lambda\to\infty} \norm{\sigma(\lambda x) - z} = \limsup_{\lambda\to\infty} \norm{\sigma(\lambda x) - z} =0
   \]
   i.e. $\sigma(\lambda x)\to z$ as $\lambda\to \infty$, if $x\in \Psi_+$.
   
   The cases $x\in\Psi_-$ and $x\in\Psi_0$ are treated similarly (with $z=0$ now).
\end{enumerate}

\end{example}

\begin{example}
In view of the previous example, we further expand on the idea of an activation function operating on each coordinate. Let $\mathfrak{X}$ be a separable Hilbert space with an orthonormal basis $(e_k)_{k\in\mathbb N}$ and inner product naturally denoted $\langle\cdot,\cdot\rangle$. 
For $x\in\mathfrak{X}$, we define the activation function as
$$
\sigma(x)=\sum_{k=1}^{\infty}\hat{\sigma}(x_k)e_k
$$
where $x_k:=\langle x,e_k\rangle$ and $\hat{\sigma}:\mathbb R\rightarrow\mathbb R$. For a linear operator $A\in L(\mathfrak{X})$, we can introduce a family of linear functionals $\rho_k\in\mathfrak{X}'$ by
$$
\rho_k(x)=\langle Ax,e_k\rangle
$$
to obtain 
$$
Ax+b=\sum_{k=1}^{\infty}(\rho_k(x)+b_k)e_k
$$
with $b_k:=\langle b,e_k\rangle$. But then a neuron becomes, with $\ell=\sum_{k=1}^{\infty}\ell_ke_k\in\mathfrak{X}$,
\begin{equation}
\label{repr:infinite-wide-nn}
\langle\ell,\sigma(Ax+b)\rangle=\sum_{k=1}^{\infty}\ell_k\hat{\sigma}(\rho_k(x)+b_k)
\end{equation}
We remark that the representation on the right-hand side above links to infinite wide neural networks. Williams \cite{Williams} proposes and studies such networks using weighted integral representations of the infinite layer to encode the sum, and relates such networks to Gaussian processes (see also Cho and Saul \cite{ChoSaul}). As $\ell$ defines a linear functional, we can represent it as an integral operator rather than a sum which shows that our definition of neural networks is a generalisation of this class. Infinitely wide neural networks are based on the approximation results of Hornik \cite{Hornik-93}. 

Observe that we must require $\hat{\sigma}(0)=0$, otherwise $\sigma(0)=\hat{\sigma}(0)\sum_{k=1}^{\infty}e_k\notin\mathfrak{X}$. Moreover, if $\hat{\sigma}$ is Lipschitz continuous, it follows readily that $\sigma$ becomes Lipschitz continuous. We have that,
$$
\vert\hat{\sigma}(x_k)\vert=\vert\hat{\sigma}(x_k)-\hat{\sigma}(0)\vert\leq K\vert x_k\vert
$$
and therefore $\sigma(x)\in\mathfrak{X}$ as
\begin{equation}
    \label{hilbert-activation-ex}
\sum_{k=1}^{\infty}\hat{\sigma}^2(x_k)\leq K^2\sum_{k=1}^{\infty}x_k^2<\infty.
\end{equation}
To stay within the framework developed in this paper, we also need to have a bounded activation function. However, in the infinite dimensional setting this does not come for free. In light of \eqref{hilbert-activation-ex} one could ask for an activation function $\hat{\sigma}$ which is bounded and goes sufficiently fast to zero around the origin. However, let $\hat{\sigma}=0$ on $[-\varepsilon,\varepsilon]$ with $0<\varepsilon<1$ say. Then, for $x\in \X$,
\[
\abs{\sigma(x)}^2 = \sum_{k: \,\abs{x_k}>\varepsilon} \abs{\hat{\sigma}(x_k)}^2
\]
If now $x=\sum_{k=1}^Ne_k$, then
\[
\abs{\sigma(x)}^2 = N\,\abs{\hat{\sigma}(1)}^2
\]
which blows up when $N$ grows. It is an interesting question to generalise our activation functions to go beyond boundedness and allow for linear or polynomial growth, say.
\end{example}

\section{Approximation for general codomain}\label{sec:BanachvaluedNN}

In this section we are going to show that our results can be extended to functions $f\in C(\X;\Y)$ where $(\Y,\norm{\cdot}_{\Y})$ is an $\mathbb F$-Banach space.

As a first step, we need the following simple lemma, which enables us to approximate with our neural network continuous functions from $\X$ into $\mathbb F^d, d\in\N $:
\begin{lemma}\label{lemma: NN valued in Fd}
Let $\X$ be an $\mathbb F$-Fr\'echet space, and let $\sigma:\X\to\X$ be continuous and discriminatory. Then, given $f\in C(\X;\mathbb F^d)$, a compact subset $K$ of $\X$, and $\varepsilon>0$, there exist $\mathcal{N}^i= \sum_{m=1}^M \alpha_m^i\mathcal{N}_{\ell_m^i,A_m^i,b_m^i}\in  \mathfrak N(\sigma)$, $i=1,\dots ,d$, with suitable $\alpha_m^i\in\mathbb F, \ell_m^i\in\X',A_m^i\in\mathcal{L}(\X)$ and $b_m^i\in\X$ such that
\begin{equation*}\label{eq: NN valued in Fd}
   \sup_{x\in K}\norm{f(x) - (\mathcal{N}^1(x),\dots, \mathcal{N}^d(x))}_{\mathbb{F}^d} < \varepsilon
\end{equation*}
where for all $\xi\in \mathbb F^d$ we have $\norm{\xi}_{\mathbb F^d} = \sum_{i=1}^d\abs{\xi^i}$.
\end{lemma}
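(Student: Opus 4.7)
The plan is to proceed componentwise, reducing the $\mathbb{F}^d$-valued approximation to $d$ applications of the scalar density result Theorem~\ref{prop: density}. This is natural because the norm $\norm{\cdot}_{\mathbb{F}^d}$ is chosen to be the $\ell^1$-norm on the coordinates, so errors in each coordinate combine additively.

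First I would write $f = (f^1, \dots, f^d)$ where $f^i := \pi_i \circ f$ and $\pi_i : \mathbb{F}^d \to \mathbb{F}$ is the $i$-th coordinate projection. Since $\pi_i$ is continuous (in fact linear and bounded) and $f$ is continuous, each $f^i$ lies in $C(\X; \mathbb{F})$. Now for each fixed $i \in \{1, \dots, d\}$, I apply Theorem~\ref{prop: density} to $f^i$ on the compact set $K$ with tolerance $\varepsilon/d$: this yields an integer $M_i$, together with $\alpha^i_m \in \mathbb{F}$, $\ell^i_m \in \X'$, $A^i_m \in \mathcal{L}(\X)$, and $b^i_m \in \X$ for $m = 1, \dots, M_i$, such that
\begin{equation*}
    \mathcal{N}^i := \sum_{m=1}^{M_i} \alpha^i_m \mathcal{N}_{\ell^i_m, A^i_m, b^i_m} \in \mathfrak N(\sigma), \qquad \sup_{x \in K} \abs{f^i(x) - \mathcal{N}^i(x)} < \frac{\varepsilon}{d}.
\end{equation*}

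By taking $M := \max_i M_i$ and padding each sum with zero coefficients $\alpha^i_m = 0$ for $m > M_i$ (choosing arbitrary $\ell^i_m, A^i_m, b^i_m$ for those indices), I can assume all the sums have the same length $M$, matching the statement. Finally, summing the coordinate-wise estimates yields
\begin{equation*}
    \sup_{x\in K}\norm{f(x) - (\mathcal{N}^1(x), \dots, \mathcal{N}^d(x))}_{\mathbb{F}^d} = \sup_{x\in K}\sum_{i=1}^d \abs{f^i(x) - \mathcal{N}^i(x)} \leq \sum_{i=1}^d \sup_{x\in K} \abs{f^i(x) - \mathcal{N}^i(x)} < \varepsilon,
\end{equation*}
which is the desired bound. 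There is no substantial obstacle here; the lemma is essentially a bookkeeping consequence of Theorem~\ref{prop: density} applied coordinate by coordinate, and the only minor point is to match the summation index $M$ across coordinates by the padding argument above.
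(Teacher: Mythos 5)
Your proposal is correct and follows exactly the same route as the paper: decompose $f$ into its $d$ scalar components, apply Theorem~\ref{prop: density} to each on $K$ with tolerance $\varepsilon/d$, and sum the errors in the $\ell^1$-norm. The only difference is that you spell out the padding argument to equalize the number of neurons across coordinates, a bookkeeping detail the paper leaves implicit.
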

\begin{proof}
We write $f=(f^1,\dots ,f^d)$ with $f^i\in C(\X;\mathbb F), i=1,\dots, d$. Given $K\subset  \X$ and $\varepsilon>0$, Theorem \ref{approx:prop} guarantees the existence of $\mathcal{N}^i\in  \mathfrak N(\sigma)$ such that 
$$
\sup_{x\in K}\abs{f^i(x) - \mathcal{N}^i(x)} < \varepsilon/d
$$
and we are done.
\end{proof}
We are now ready to prove the following:
\begin{theorem}\label{thm: NN valued in Banach}
Let $\X$ be an $\mathbb F$-Fr\'echet space, and let $\sigma:\X\to\X$ be continuous and discriminatory. Let $(\Y,\norm{\cdot}_{\Y})$ be an $\mathbb F$-Banach space. Then, given $f\in C(\X;\Y)$, a compact subset $K$ of $\X$, and $\varepsilon>0$, there exist $d\in\N$, $v_1,\dots ,v_d$ linear independent unit vectors of $\Y$, $\mathcal{N}^1, \dots \mathcal{N}^d\in  \mathfrak N(\sigma)$, such that, by defining
$$
\mathcal{N}(x) := \sum_{i=1}^d \mathcal{N}^i(x)v_i,\quad x\in\X,
$$
it holds
\begin{equation*}\label{eq: NN valued in Banach}
   \sup_{x\in K}\norm{f(x) - \mathcal{N}(x)}_{\Y} < \varepsilon.
\end{equation*}
\end{theorem}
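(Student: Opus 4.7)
My plan is to reduce the Banach-valued approximation problem to the scalar case already covered by Lemma~\ref{lemma: NN valued in Fd}. The essential idea is that $f(K)$ is compact in $\Y$, so its range can be squeezed into a finite-dimensional subspace up to an error of $\varepsilon/2$ via a partition-of-unity construction. Concretely, I would first fix $\varepsilon>0$, observe that $f(K)$ is compact, and choose a finite $\varepsilon/2$-net $y_1,\dots,y_n\in\Y$ so that the open sets $U_i := f^{-1}(B_\Y(y_i,\varepsilon/2))\cap K$ form a finite open cover of $K$ in its relative topology. Since $K$ is a compact metric space (being a compact subset of the metrizable space $\X$), it is paracompact and a continuous partition of unity $\{\phi_i\}_{i=1}^n$ subordinate to $\{U_i\}$ exists.

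With these partitions in hand I define the continuous function $g:K\to\Y$ by $g(x):=\sum_{i=1}^n \phi_i(x) y_i$ and observe that
\begin{equation*}
\Vert f(x)-g(x)\Vert_\Y \leq \sum_{i=1}^n\phi_i(x)\Vert f(x)-y_i\Vert_\Y < \varepsilon/2, \quad x\in K,
\end{equation*}
since only those $i$ with $x\in U_i$ contribute to the sum. The image of $g$ lies in the finite-dimensional subspace $V:=\Span\{y_1,\dots,y_n\}$; choosing a basis of $V$ and normalising produces linearly independent unit vectors $v_1,\dots,v_d\in\Y$. Expressing each $y_i=\sum_{j=1}^d\beta_{ij}v_j$, we obtain
\begin{equation*}
g(x) = \sum_{j=1}^d c_j(x)v_j, \quad c_j(x):= \sum_{i=1}^n \beta_{ij}\phi_i(x),
\end{equation*}
so the coefficient maps $c_j:K\to \mathbb F$ are continuous.

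To apply Lemma~\ref{lemma: NN valued in Fd} I need these coefficients defined and continuous on all of $\X$. Since $K$ is compact, hence closed, in the metrizable (thus normal) space $\X$, the Tietze extension theorem extends each $c_j$ to a continuous function $\tilde c_j:\X\to\mathbb F$. The lemma, applied to $\tilde c=(\tilde c_1,\dots,\tilde c_d)\in C(\X;\mathbb F^d)$ with tolerance $\varepsilon/2$, yields $\mathcal N^1,\dots,\mathcal N^d\in \mathfrak N(\sigma)$ with $\sup_{x\in K}\sum_{j=1}^d\vert c_j(x)-\mathcal N^j(x)\vert<\varepsilon/2$. Setting $\mathcal N(x):=\sum_{j=1}^d \mathcal N^j(x)v_j$, the triangle inequality together with $\Vert v_j\Vert_\Y=1$ gives
\begin{equation*}
\Vert f(x)-\mathcal N(x)\Vert_\Y \leq \Vert f(x)-g(x)\Vert_\Y + \sum_{j=1}^d \vert c_j(x)-\mathcal N^j(x)\vert\,\Vert v_j\Vert_\Y < \varepsilon/2 + \varepsilon/2 = \varepsilon
\end{equation*}
for every $x\in K$, as required.

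The only non-routine step is guaranteeing the existence of a finite continuous partition of unity on $K$ subordinate to a preassigned cover; this is standard for compact metric spaces, so no significant obstacle arises. The invocations of Tietze extension and of the scalar Lemma~\ref{lemma: NN valued in Fd} are both immediate once the partition-of-unity construction has produced the reduction to $\mathbb F^d$-valued targets.
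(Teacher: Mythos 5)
Your proof is correct and follows essentially the same route as the paper's: reduce to an approximation of $f$ on $K$ with values in a finite-dimensional subspace spanned by unit vectors $v_1,\dots,v_d$, extend the coefficient functions from $K$ to $\X$ by Tietze, and conclude with Lemma~\ref{lemma: NN valued in Fd} and the triangle inequality. The only cosmetic difference is that you build the finite-dimensional approximant explicitly via an $\varepsilon$-net and a partition of unity, whereas the paper cites this step as an abstract result from Brezis --- but that cited result is proved by exactly the construction you give, so the two arguments coincide.
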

\begin{proof}
We recall the following general approximation result (see for example Brezis \cite[Ch. 6.1]{brezis2010functional}): given a topological space $(Z,\tau)$, an $\mathbb F$-Banach space $(\Y,\norm{\cdot}_{\Y})$ and a continuous map
$$
T: Z\to \Y
$$
such that $T(Z)$ is relatively compact in $Y$, then, given $\varepsilon>0$ there exists $T_\varepsilon: Z\to \Y$ continuous, with $T_\varepsilon(Z)$ contained in a finite-dimensional subspace of $\Y$, and such that
$$
\norm{T_\varepsilon(z) - T(z)}_{\Y}<\varepsilon, \quad z\in Z.
$$
To apply this result in our present setting, we first restrict $f$ to $K$
$$
f\big |_K : K \to \Y,
$$
obtaining a continuous function whose range is compact in $\Y$. Therefore, we may find $f_\varepsilon:K\to \Y$ continuous and such that
\begin{enumerate}
    \item $f_\varepsilon(K)\subset \Span\{v_1,\dots v_d\}\subset \Y$ for suitable linear independent elements $v_1,\dots ,v_d$, whose norm we assume to be equal to 1.
    \item $ \sup_{x\in K}\norm{f(x)-f_\varepsilon(x)}_{\Y} = \sup_{x\in K}\norm{f\big |_K(x)-f_\varepsilon(x)}_{\Y} < \varepsilon/2 $.
\end{enumerate}
We set for convenience $V =  \Span\{v_1,\dots v_d\} $, and we write $f_\varepsilon$ as 
$$
f_\varepsilon(x) = \sum_{i=1}^d f^i_\varepsilon(x) v_i, \quad x\in K
$$
with suitable $f^i_\varepsilon\in C(K;\mathbb F)$, $i=1,\dots, d$. Being $\X$ metrizable, it is clearly normal. Therefore, by the Tietze extension theorem (since $K$ is closed), there exist $g^i_\varepsilon\in C(\X;\mathbb F)$ extensions of $f^i_\varepsilon$, $i=1,\dots, d$. 

We define $g_\varepsilon(x):=\sum_{i=1}^dg^i_\varepsilon(x)v_i,\,x\in\X$. Then $g_\varepsilon\in C(\X;\Y)$, $g_\varepsilon(\X)\subset V $ and 
$$
 \sup_{x\in K}\norm{f(x)-g_\varepsilon(x)}_{\Y}  < \varepsilon/2.
$$
By Lemma \ref{lemma: NN valued in Fd} we may approximate on $K$
$$
\X\ni x \mapsto (g^1_\varepsilon(x),\dots ,g^d_\varepsilon(x))\in \mathbb F^d
$$
with $(\mathcal{N}^1,\dots,\mathcal{N}^d)$ such that
$$
\sup_{x\in K} \norm{ (g^1_\varepsilon(x),\dots ,g^d_\varepsilon(x)) -  (\mathcal{N}^1(x),\dots,\mathcal{N}^d(x)) }_{\mathbb F^d} < \varepsilon/2.
$$
We define 
$$
\mathcal{N}(x) := \sum_{i=1}^d \mathcal{N}^i(x)v_i,\quad x\in\X,
$$
which has the required property, since we have
\begin{equation*}
    \begin{split}
     \sup_{x\in K}\norm{f(x) - \mathcal{N}(x)}_{\Y} & \leq 
          \sup_{x\in K}\norm{f(x) - g_\varepsilon(x)}_{\Y} 
      + \sup_{x\in K}\norm{g_\varepsilon(x) - \mathcal{N}(x)}_{\Y} \\
      & < \varepsilon/2 + \sup_{x\in K}\sum_{i=1}^d\abs{g^i_\varepsilon(x) - \mathcal{N}^i(x)}\,\norm{v_i}_{\Y}\\
      & = \varepsilon/2 + \sup_{x\in K}\sum_{i=1}^d\abs{g^i_\varepsilon(x) - \mathcal{N}^i(x)}\\
       & = \varepsilon/2 + \sup_{x\in K} \norm{ (g^1_\varepsilon(x),\dots ,g^d_\varepsilon(x)) -  (\mathcal{N}^1(x),\dots,\mathcal{N}^d(x)) }_{\mathbb F^d}\\
      & < \varepsilon.
    \end{split}
\end{equation*}

\end{proof}

\section{Approximation with finite dimensional neural networks}\label{finte:approx:section}

In this section we prove a result that ensures that one can approximate a given abstract neural net arbitrary well via a neural network that is constructed from finite dimensional maps and can thus be trained. Of course, this can only work if we can approximate any given $x\in \X$ sufficiently well with a finite dimensional quantity as otherwise we could not even represent $x$ in a computer. It is therefore plausible that we can derive such results only if some kind of approximation property holds on $\X$. This approximation property must ensure that one can approximate the identity map on $\X$ by continuous linear maps of finite rank, uniformly on some subset $K\subset \X$ of interest. In spaces with a countable Schauder basis $(e_n)_{n\in \mathbb{N}}$, the approximating linear maps are usually the projections $\Pi_N: \X \to \Span\{e_1,\dots ,e_N\}$. Unfortunately, not every Fr\'echet space has a Schauder basis as shown by Enflo \cite{10.1007/BF02392270}. We refer the reader to Schaefer \cite[Ch. III, Sec. 9]{schaefer1971topological} for a discussion of the approximation property and existence of a Schauder basis for Fr\'echet space, which was an open problem until answered in \cite{10.1007/BF02392270}. Whenever the space $\X$ has a Schauder basis, however, we can actually derive an approximation of our abstract neural network with a trainable finite dimensional neural network as we shall see in this section. 

To start, we are first going to work in a Banach space setting.  
Let therefore $\X$ be a real separable Banach space with norm denoted by $\Vert\cdot\Vert$ that admits a normalized Schauder basis $(e_k)_{k\in\N}$, namely each $x\in\X$ has a unique representation $x=\sum_{k=1}^\infty x_k e_k$ and $\norm{e_k}=1$ for all $k$.
It follows as in Schaefer \cite[Thm. 9.6, p. 115]{schaefer1971topological} that 
\begin{equation*}
    \Pi_N: \X \to \Span\{e_1,\dots ,e_N\},\quad x\mapsto \sum_{k=1}^N x_k e_k,\quad N\in\N
\end{equation*}
is linear and bounded with $\sup_{N\in\N}\norm{\Pi_N}_{op}\leq C$ for some suitable constant $C\geq 1$, and that for any $K\subset\X$ compact we have $\sup_{x\in K}\norm{x-\Pi_N x}\to 0$ as $N\to\infty$.

While we know by \cite{10.1007/BF02392270} that there exist Banach spaces without a Schauder basis, it is also true that ``all usual separable Banach spaces of Analysis admit a Schauder basis'' (see Brezis \cite{brezis2010functional}). 
For example for the Banach spaces $L^p(\mathbb{R}^n)$, where $1\leq p <\infty$, as well as for the Sobolev and Besov spaces, a basis is given by wavelets (see Triebel \cite{HansTriebel2004}). See Heil \cite{heil2011basis} for many more examples.

We assume now that the activation function $\sigma:\X\to\X$ is Lipschitz, namely 
\begin{equation}\label{eq: Lipschitz 1}
    \norm{\sigma(x)-\sigma(y)}\leq \text{Lip}(\sigma) \norm{x-y}, \quad x,y\in\X. 
\end{equation}
where $0\leq \text{Lip}(\sigma)<\infty$. Of course since $\X$ is already a metric space, we do not use the metric $d$ defined in (\ref{metric:loc:conv:space}), but the one implied by the norm, i.e. $d(x_1,x_2)=\norm{x_1-x_2}$.

Observe also that the activation functions in Example \ref{ex: some concrete examples} become Lipschitz as soon as we impose that the $\beta_i$'s are Lipschitz. The activation function in Example \ref{ex: abstract condition on sigma} is already Lipschitz. Therefore, this condition does not seem very restrictive.

We are ready to prove:

\begin{proposition}\label{prop: finite dimensional approx, Banach}
Let $\X$ be a real separable Banach space that admits a normalized Schauder basis $(e_k)_{k\in\N}$ and let $\sigma$ be Lipschitz. Let $f\in C(\X;\R)$, $K\subset\X$ compact and $\varepsilon>0$. Assume
\begin{equation*}
    \mathcal N^{\epsilon} (x) = \sum_{j=1}^M\langle \ell_j,\sigma(A_jx+b_j)\rangle,\quad x\in\X
\end{equation*}
with $\ell_j\in\X',A_j\in\mathcal{L}(\X)$ and $b_j\in\X$ such that 
\begin{equation*}
    \sup_{x\in K}\abs{f(x)-\mathcal N^{\epsilon}(x)}<\varepsilon.
\end{equation*}
Fix $\delta>0$. Then there exists $N_\ast=N_\ast(\mathcal N^\epsilon,\delta)\in\N$ such that for $N\geq N_\ast$
\begin{equation}\label{approx:finite}
    \sup_{x\in K}\abs{f(x)-\sum_{j=1}^M\langle \ell_j\circ\Pi_N,\sigma(
    \Pi_{N}A_j\Pi_{N}x+\Pi_{N}b_j)\rangle}<\varepsilon+\delta.
\end{equation}
\end{proposition}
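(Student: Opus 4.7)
The strategy is to invoke the triangle inequality with the given $\varepsilon$-bound, reducing the task to proving that for every $\delta>0$ there exists $N_\ast$ such that
\[
\sup_{x\in K}\abs{\mathcal N^\epsilon(x)-\widetilde{\mathcal N}_N(x)}<\delta,\qquad N\geq N_\ast,
\]
where $\widetilde{\mathcal N}_N(x):=\sum_{j=1}^M\langle \ell_j\circ\Pi_N,\sigma(\Pi_N A_j\Pi_Nx+\Pi_N b_j)\rangle$. Since $M$ is finite, it suffices to make each summand smaller than $\delta/M$ uniformly in $x\in K$.

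For a fixed $j$, introduce $y_j(x):=A_jx+b_j$ and $y_j^N(x):=\Pi_NA_j\Pi_Nx+\Pi_Nb_j$. First I would show that $\sup_{x\in K}\norm{y_j(x)-y_j^N(x)}\to 0$ via the decomposition
\[
y_j(x)-y_j^N(x)=(I-\Pi_N)A_jx+\Pi_NA_j(x-\Pi_Nx)+(I-\Pi_N)b_j.
\]
The first summand tends to $0$ uniformly on $K$ because $A_j(K)$ is compact in $\X$, so $\Pi_N\to I$ uniformly on it by the property of the Schauder basis recalled before the statement; the second summand is dominated by $C\,\norm{A_j}_{op}\sup_{x\in K}\norm{x-\Pi_Nx}$, which vanishes by compactness of $K$ together with the uniform operator bound $\sup_N\norm{\Pi_N}_{op}\leq C$; the third is a single vector with $(I-\Pi_N)b_j\to 0$ by the Schauder basis expansion of $b_j$.

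Next I decompose the per-neuron discrepancy as
\[
\langle \ell_j,\sigma(y_j)\rangle-\langle \ell_j\circ\Pi_N,\sigma(y_j^N)\rangle=\langle \ell_j,\sigma(y_j)-\sigma(y_j^N)\rangle+\langle \ell_j,(I-\Pi_N)\sigma(y_j^N)\rangle.
\]
For the first term, Lipschitz continuity of $\sigma$ together with continuity of $\ell_j$ yields the bound $\norm{\ell_j}_{op}\,\text{Lip}(\sigma)\sup_{x\in K}\norm{y_j-y_j^N}$, which vanishes by the previous step. For the second term, the key trick is to add and subtract $(I-\Pi_N)\sigma(y_j)$: the piece $(I-\Pi_N)\sigma(y_j)$ tends to $0$ uniformly on $K$ because $\sigma(A_jK+b_j)$ is compact (continuous image of a compact), while the remainder $(I-\Pi_N)\bigl[\sigma(y_j^N)-\sigma(y_j)\bigr]$ is controlled by $(1+C)\,\text{Lip}(\sigma)\sup_{x\in K}\norm{y_j-y_j^N}$, which also vanishes. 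Applying $\ell_j$ and summing over $j=1,\dots,M$ yields the desired $\delta$-bound once $N$ is large enough.

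The main obstacle is the second term of the neuron-level decomposition, where the argument of $\sigma$ itself depends on $N$, so one cannot directly apply the statement that $\Pi_N\to I$ uniformly on compacts. The add-and-subtract step is what tames this, reducing matters to the genuinely compact set $\sigma(A_jK+b_j)$; a secondary technical point is that one needs Lipschitz continuity of $\sigma$ (and not merely continuity) in order to propagate the uniform smallness of $y_j-y_j^N$ through $\sigma$ without invoking uniform continuity on sets that are themselves $N$-dependent.
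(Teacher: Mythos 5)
Your proof is correct and follows essentially the same route as the paper's: both arguments reduce the per-neuron error to the uniform convergence $\Pi_N\to I$ on the three compact sets $K$, $A_jK$ and $\sigma(A_jK+b_j)$, using the uniform bound $\sup_N\norm{\Pi_N}_{op}\leq C$ and the Lipschitz property of $\sigma$. The only cosmetic difference is the order of the splitting: the paper writes the discrepancy as $\langle\ell_j,(I-\Pi_N)\sigma(A_jx+b_j)\rangle+\langle\ell_j,\Pi_N\bigl(\sigma(A_jx+b_j)-\sigma(\Pi_NA_j\Pi_Nx+\Pi_Nb_j)\bigr)\rangle$, so the operator $I-\Pi_N$ lands directly on the $N$-independent compact image and no further add-and-subtract is needed, whereas your version puts $I-\Pi_N$ on the $N$-dependent term and repairs this with one extra (perfectly valid) add-and-subtract.
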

\begin{proof}
For $j=1,\dots,M$, $N\in\N$ and $x\in K$ we indeed have
\begin{equation*}
    \begin{split}
        &\abs{\langle \ell_j,\sigma(A_jx+b_j)\rangle-\langle\ell_j\circ\Pi_N,\sigma(
    \Pi_{N}A_j\Pi_{N}x+\Pi_{N}b_j)\rangle}  \\
    &\qquad\qquad \leq 
    \abs{\langle \ell_j,\sigma(A_jx+b_j) - \Pi_N\sigma(A_jx+b_j)\rangle} \\
    &\qquad \qquad \qquad+
    \abs{\langle \ell_j,\Pi_N\sigma(A_jx+b_j)-\Pi_N\sigma(
    \Pi_{N}A_j\Pi_{N}x+\Pi_{N}b_j)\rangle}\\
    &\qquad \qquad\leq \norm{\ell_j} \norm{\sigma(A_jx+b_j) - \Pi_N\sigma(A_jx+b_j)}\\           
    &\qquad \qquad \qquad+ 
    \norm{\ell_j} C \norm{ \sigma(A_jx+b_j)-\sigma( \Pi_{N}A_j\Pi_{N}x+\Pi_{N}b_j) },
    \end{split}
\end{equation*}
where in the last line we have used that $\sup_{N\in\N}\norm{\Pi_N}_{op}\leq C$. Thus, as far as it concerns the second term, it holds
\begin{equation*}
    \begin{split}
        &\norm{\ell_j} C \norm{ \sigma(A_jx+b_j)-\sigma( \Pi_{N}A_j\Pi_{N}x+\Pi_{N}b_j) }  \\
    &\qquad\qquad \leq
    C\norm{\ell_j}\text{Lip}(\sigma)\norm{A_jx+b_j- \Pi_{N}A_j\Pi_{N}x-\Pi_{N}b_j}\\
     &\qquad\qquad \leq
     C\norm{\ell_j}\text{Lip}(\sigma)
     \left\{
     \norm{A_jx - \Pi_NA_jx} + \norm{\Pi_NA_jx-\Pi_NA_j\Pi_Nx} + \norm{b_j-\Pi_Nb_j}     
     \right\}\\
     &\qquad\qquad \leq
     C\norm{\ell_j}\text{Lip}(\sigma)
     \left\{
     \norm{A_jx - \Pi_NA_jx} + C\norm{A_jx-A_j\Pi_Nx} + \norm{b_j-\Pi_Nb_j}     
     \right\}\\
     &\qquad\qquad \leq
     C\norm{\ell_j}\text{Lip}(\sigma)
     \left\{
     \norm{A_jx - \Pi_NA_jx} + C\norm{A_j}_{op}\norm{x-\Pi_Nx} + \norm{b_j-\Pi_Nb_j}     
     \right\}\\
      &\qquad\qquad \leq
     C\norm{\ell_j}\text{Lip}(\sigma)
     \left\{
     \sup_{x\in K}\norm{A_jx - \Pi_NA_jx} \right.\\
    &\quad\quad\quad\quad\quad\quad\quad\quad\quad\quad\left. + C\norm{A_j}_{op}\sup_{x\in K}\norm{x-\Pi_Nx} + \norm{b_j-\Pi_Nb_j}     
     \right\}\\
     &\qquad\qquad =
     C\norm{\ell_j}\text{Lip}(\sigma)
     \left\{
     \sup_{y\in A_jK}\norm{y - \Pi_Ny} \right.\\
    &\quad\quad\quad\quad\quad\quad\quad\quad\quad\quad\left. + C\norm{A_j}_{op}\sup_{x\in K}\norm{x-\Pi_Nx} + \norm{b_j-\Pi_Nb_j}     
     \right\}.
    \end{split}
\end{equation*} 
Setting for convenience $\sigma_j := \sigma(A_jK +b_j)$, and noticing that it is compact, we eventually arrive at
\begin{equation*}
    \begin{split}
        &\abs{\langle \ell_j,\sigma(A_jx+b_j)\rangle-\langle\ell_j\circ \Pi_N,\sigma(
    \Pi_{N}A_j\Pi_{N}x+\Pi_{N}b_j)\rangle}  \\
     &\qquad\qquad \leq
     \norm{\ell_j}\text{Lip}(\sigma)
     \left\{
     \sup_{y\in A_jK}\norm{y - \Pi_Ny} \right.\\
    &\quad\quad\quad\quad\quad\quad\quad\quad\quad\quad\left. + C\norm{A_j}_{op}\sup_{x\in K}\norm{x-\Pi_Nx} + \norm{b_j-\Pi_Nb_j}     
     \right\}\\
     &\qquad \qquad + 
     \norm{\ell_j} \sup_{y\in\sigma_j}\norm{y-\Pi_Ny}
      \end{split}
\end{equation*}
Observe that $A_jK\subset\X$ is compact. By the approximation property provided by the Schauder basis $(e_k)_{k\in\N}$, we may find $N(j)\in\N$ such that:
\begin{equation*}
    \begin{cases}
    \sup_{y\in A_jK}\norm{y - \Pi_Ny} < \frac{\delta}{4M\norm{\ell_j}\text{Lip}(\sigma)}\\
     \sup_{y\in\sigma_j}\norm{y-\Pi_Ny} < \frac{\delta}{4M\norm{\ell_j}}\\
    \sup_{x\in K}\norm{x-\Pi_Nx} < \frac{\delta}{4M\norm{\ell_j}C\norm{A_j}_{op} \text{Lip}(\sigma)},\quad \text{if } \norm{A_j}_{op}\neq 0\\
    \norm{b_j-\Pi_Nb_j}   < \frac{\delta}{4M\norm{\ell_j}\text{Lip}(\sigma)}
    \end{cases}
\end{equation*}
for all $N\geq N(j)$. With this choice, we then have
\begin{equation*}
    \sup_{x\in K}
    \abs{\langle \ell_j,\sigma(A_jx+b_j)\rangle-\langle\ell_j\circ\Pi_N,\sigma(
    \Pi_{N}A_j\Pi_{N}x+\Pi_{N}b_j)\rangle}
    <\delta/M. 
\end{equation*}
Therefore, setting $N_\ast:=\max\{N(1),\dots,N(M)\}$, we conclude that for all $N\geq N_\ast$
\begin{equation*}
    \sup_{x\in K}\abs{f(x)-\sum_{j=1}^M\langle \ell_j\circ\Pi_N,\sigma(
    \Pi_{N}A_j\Pi_{N}x+\Pi_{N}b_j)\rangle}<\varepsilon+\delta.
\end{equation*}
\end{proof}
We mention that the function $\mathcal{N}^{\varepsilon}: \X\rightarrow \R$, which is required in the proposition above, exists for instance in view of Theorem \ref{prop: density}, as soon as we assume that $\sigma$ is discriminatory.
\begin{remark}
The terms appearing in the sum in (\ref{approx:finite}) can now easily be programmed in a computer. We see that for large $N$, it is sufficient to consider the finite dimensional input values $\Pi_N (x)$ instead of $x$, and then successively the restriction of the operators $\Pi_N A_j, \sigma$ and $\ell_j$ to $\Span\{e_1,\dots ,e_N\}$ instead of the maps $A_j, \sigma$ and $\ell_j$ for $j=1,\dots , M$. The maps $\Pi_N A_j, \sigma$ and $\ell_j$ are finite dimensional when restricted to $\Span\{e_1,\dots ,e_N\}$ and the sum above thus resembles a classical neural network. However, instead of the typical one dimensional activation function, the function $\Pi_N \circ \sigma$ restricted to $\Span\{e_1,\dots ,e_N\}$ is multidimensional. 
\end{remark}
With an extra effort it is possible to generalize this result to real separable Fr\'echet spaces that admit Schauder basis. Examples include for instance the Schwartz space of rapidly decreasing functions, for which a basis is given in terms of Hermite functions (see Schwartz \cite{schwartz1957theorie}) and the Hida test function and distribution space (see Holden {\it et al.} \cite[Def 2.3.2.]{holden2010stochastic}).

Let us now see how to do this generalization. Following Meise and Vogt \cite[28.10, p. 331]{meise1992einfuhrung}, a Schauder basis for a real separable Fr\'echet space is a sequence $(e_k)_{k\in\N}\subset\X$, such that each $x\in\X$ has a unique representation $x=\sum_{k=1}^\infty x_k e_k$. As above, we define 
\begin{equation*}
    \Pi_N: \X \to \Span\{e_1,\dots ,e_N\},\quad x\mapsto \sum_{k=1}^N x_k e_k,\quad N\in\N
\end{equation*}
which is linear and bounded. Still from Meise and Vogt \cite[28.10, p. 331]{meise1992einfuhrung}, we see that for any $j\in\N$ there exists $m\in\N$ and $C>0$ such that for any $x\in\X$ 
\begin{equation}\label{eq: uniform bound for the projections}
    \sup_{N\in\N} p_j\left(\Pi_Nx\right) \leq C p_m(x)
\end{equation}
Moreover, we can easily see that for any $K\subset\X$ compact and any $j\in\N$ we have 
$$
\sup_{x\in K}p_j(x-\Pi_N x)\to 0
$$ 
as $N\to\infty$. Indeed, following Schaefer \cite[p. 81]{schaefer1971topological} and from \eqref{eq: uniform bound for the projections} we see that
\begin{equation*}
     \sup_{N\in\N} \sup_{x\in S} p_j\left(\Pi_Nx\right) \leq C \sup_{x\in S}p_m(x)<\infty
\end{equation*}
for any $j\in\N$ and $S\subset\X$ with finite cardinality. Trivially, $\sup_{x\in S} p_j\left(x\right)<\infty$. We therefore deduce that the subset $\{\Pi_N\}_N \cup \{I\}\subset \mathcal L (\X)$ is simply bounded, with $I$ being the identity map. By Schaefer \cite[Thm 4.2, p. 83]{schaefer1971topological}, it is equicontinuous, being $\X$ a Baire space. By Schaefer \cite[Thm 4.5, p. 85]{schaefer1971topological} we therefore conclude that we have convergence on all precompact subsets of $\X$.

We are now going to impose the following ``graded'' Lipschitz condition on the non-linearity $\sigma$:
\begin{equation}\label{eq: Lipschitz 2}
    \exists k_0\in\N: \forall k\geq k_0 \;\exists C_k\geq 0: p_k(\sigma(x)-\sigma(y))\leq C_k p_k(x-y),\quad x,y\in\X.
\end{equation}
Notice that such a map $\sigma$ is automatically continuous.

We are ready to prove:
\begin{theorem}\label{prop: finite dimensional approx, Frechet}
Let $\X$ be a real separable Fr\'echet space that admits a Schauder basis $(e_k)_{k\in\N}$ and let $\sigma$ satisfy condition \eqref{eq: Lipschitz 2}. Let $f\in C(\X;\R)$, $K\subset\X$ compact and $\varepsilon>0$. Assume
\begin{equation*}
    \mathcal{N}^{\varepsilon}(x) = \sum_{j=1}^M\langle \ell_j,\sigma(A_jx+b_j)\rangle,\quad x\in\X
\end{equation*}
with $\ell_j\in\X',A_j\in\mathcal{L}(\X)$ and $b_j\in\X$ such that 
\begin{equation*}
    \sup_{x\in K}\abs{f(x)-\mathcal{N}^{\varepsilon}(x)}<\varepsilon.
\end{equation*}
Fix $\delta>0$. Then there exists $N_\ast=N_\ast(\mathcal N^\epsilon,\delta)\in\N$ such that for $N\geq N_\ast$
\begin{equation*}
    \sup_{x\in K}\abs{f(x)-\sum_{j=1}^M\langle \ell_j\circ\Pi_N,\sigma(
    \Pi_{N}A_j\Pi_{N}x+\Pi_{N}b_j)\rangle}<\varepsilon+\delta.
\end{equation*}
\end{theorem}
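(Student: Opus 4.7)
The plan is to mimic the proof of Proposition \ref{prop: finite dimensional approx, Banach} term-by-term, replacing each use of the norm by an appropriate continuous seminorm from the family $(p_k)_{k\in\N}$. As before, it suffices to show that for every $j=1,\dots,M$ and every $\delta'>0$ one can find $N(j)\in\N$ such that
\begin{equation*}
   \sup_{x\in K}\bigl|\langle \ell_j,\sigma(A_jx+b_j)\rangle - \langle\ell_j\circ\Pi_N,\sigma(\Pi_N A_j\Pi_N x+\Pi_N b_j)\rangle\bigr| < \delta'
\end{equation*}
for $N\geq N(j)$; taking $\delta'=\delta/M$ and $N_\ast=\max\{N(1),\dots,N(M)\}$ will then give the claim. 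I would split each summand by the triangle inequality into
\begin{equation*}
   \mathrm{I}_j := \bigl|\ell_j\bigl(\sigma(A_jx+b_j) - \Pi_N\sigma(A_jx+b_j)\bigr)\bigr|,
\end{equation*}
\begin{equation*}
   \mathrm{II}_j := \bigl|(\ell_j\circ\Pi_N)\bigl(\sigma(A_jx+b_j) - \sigma(\Pi_N A_j\Pi_N x+\Pi_N b_j)\bigr)\bigr|.
\end{equation*}

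To estimate $\mathrm{I}_j$ I would invoke continuity of $\ell_j$: there exist $k_j\in\N$ and $C_{\ell_j}>0$ with $|\ell_j(y)|\leq C_{\ell_j}\,p_{k_j}(y)$. Since $\sigma(A_j K+b_j)\subset\X$ is compact (continuous image of a compact set), the basis-approximation property $\sup_{y\in\sigma(A_jK+b_j)} p_{k_j}(y-\Pi_N y)\to 0$ yields smallness of $\sup_{x\in K}\mathrm{I}_j$. For $\mathrm{II}_j$, I would combine $|\ell_j(\Pi_N z)|\leq C_{\ell_j}\,p_{k_j}(\Pi_N z)$ with the uniform bound \eqref{eq: uniform bound for the projections} to obtain an index $m(j)\in\N$ and $\widetilde C>0$ such that $|(\ell_j\circ\Pi_N)(z)|\leq \widetilde C\,p_{m(j)}(z)$ for all $N\in\N,z\in\X$. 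Enlarging $m(j)$ if needed, I may assume $m(j)\geq k_0$, so the graded Lipschitz condition \eqref{eq: Lipschitz 2} applies and gives $\mathrm{II}_j\leq \widetilde C\,C_{m(j)}\,p_{m(j)}(A_jx+b_j-\Pi_N A_j\Pi_N x-\Pi_N b_j)$.

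The inner seminorm is then split by the triangle inequality into three pieces: $p_{m(j)}(A_jx-\Pi_N A_jx)$, $p_{m(j)}(\Pi_N A_j x-\Pi_N A_j\Pi_N x)$ and $p_{m(j)}(b_j-\Pi_N b_j)$. The first goes to $0$ uniformly for $x\in K$ because $A_j K$ is compact and the projections converge uniformly on compacta. The third is just a constant that tends to $0$. For the middle piece I would apply \eqref{eq: uniform bound for the projections} again to obtain $p_{m(j)}(\Pi_N (A_jx-A_j\Pi_N x))\leq \widetilde C'\,p_{m'(j)}(A_jx-A_j\Pi_N x)$ for some $m'(j)\in\N$, and then continuity of $A_j\in\mathcal L(\X)$ produces an index $m''(j)\in\N$ and a constant with $p_{m'(j)}(A_j z)\leq C''p_{m''(j)}(z)$; applying this to $z=x-\Pi_N x$ reduces the second piece to $\sup_{x\in K}p_{m''(j)}(x-\Pi_N x)\to 0$, again by the compactness of $K$ and the convergence $\Pi_N\to I$ uniformly on compact subsets.

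The main obstacle, compared with the Banach case, is the careful bookkeeping of seminorm indices: each time one uses continuity of $\ell_j$, of the projections $\Pi_N$, or of $A_j$, the controlling seminorm jumps to a (possibly) higher index. I would handle this by fixing, once and for all for each $j$, a finite increasing chain of indices $k_j\leq m(j)\leq m'(j)\leq m''(j)$ with $m(j)\geq k_0$, and only then choosing $N(j)$ large enough to make each of the three limits smaller than $\delta/(3M)$ divided by the accumulated constants $C_{\ell_j}$, $\widetilde C$, $\widetilde C'$, $C''$, $C_{m(j)}$, $\lVert A_j\rVert_{\mathrm{op}}$-analogues, etc. Once this bookkeeping is done, summing in $j$ and combining with the hypothesis $\sup_{x\in K}|f(x)-\mathcal N^{\varepsilon}(x)|<\varepsilon$ yields the desired bound $\varepsilon+\delta$.
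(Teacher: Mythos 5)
Your proposal is correct and follows essentially the same route as the paper's proof: the same two-term splitting via $\Pi_N\sigma(A_jx+b_j)$, the same use of the uniform projection bound \eqref{eq: uniform bound for the projections} and of the graded Lipschitz condition \eqref{eq: Lipschitz 2} with an index enlarged past $k_0$, the same three-way decomposition of the inner seminorm, and the same compactness arguments for $K$, $A_jK$ and $\sigma(A_jK+b_j)$. The index bookkeeping you describe is exactly what the paper carries out with its chain $r(\ell_j), t(\ell_j)\vee k_0, m(\ell_j,\sigma), n(\ell_j,\sigma,A_j)$.
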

\begin{proof}
For $j=1,\dots,M$, $N\in\N$ and $x\in K$ we indeed have, for suitable integers $r(\ell_j)$, $t(\ell_j),m(\ell_j,\sigma)$ and $n(\ell_j,\sigma,A_j)$,
\begin{align*}
        &\abs{\langle \ell_j,\sigma(A_jx+b_j)\rangle-\langle\ell_j\circ\Pi_N,\sigma(
    \Pi_{N}A_j\Pi_{N}x+\Pi_{N}b_j)\rangle}  \\
    &\qquad\qquad \leq
    \abs{\langle \ell_j,\sigma(A_j x+b_j) -\Pi_N\sigma(A_j x + b_j) \rangle }\\
     &\qquad \qquad \qquad+
    \abs{\langle \ell_j,\Pi_N\sigma(A_j x+b_j) -\Pi_N\sigma(\Pi_N A_j \Pi_N x + \Pi_N b_j) \rangle }\\
    &\qquad\qquad \leq
    C(\ell_j)p_{r(\ell_j)} ( \sigma(A_j x+b_j) -\Pi_N\sigma(A_j x + b_j)) \\   
    &\qquad\qquad\qquad +
    C(\ell_j)p_{r(\ell_j)} (\Pi_N\sigma(A_j x+b_j) -\Pi_N\sigma(\Pi_N A_j \Pi_N x + \Pi_N b_j) )\\
    &\qquad\qquad \leq
    C(\ell_j)p_{r(\ell_j)} ( \sigma(A_j x+b_j) -\Pi_N\sigma(A_j x + b_j)) \\   
    &\qquad\qquad\qquad +
    C(\ell_j)\sup_{N\in\N}p_{r(\ell_j)} (\Pi_N\sigma(A_j x+b_j) -\Pi_N\sigma(\Pi_N A_j \Pi_N x + \Pi_N b_j) )\\
    &\qquad\qquad \leq
    C(\ell_j)p_{r(\ell_j)} ( \sigma(A_j x+b_j) -\Pi_N\sigma(A_j x + b_j)) \\   
    &\qquad\qquad\qquad +
    C(\ell_j)C p_{t(\ell_j)} (\sigma(A_j x+b_j) -\sigma(\Pi_N A_j \Pi_N x + \Pi_N b_j) ),
\end{align*}
where in the last line we have used the fact that the constant $C$ in \eqref{eq: uniform bound for the projections} is independent of $N$ and $x$. Therefore, for the second term in the last expression we have
\begin{align*}
        &C(\ell_j)\,p_{t(\ell_j)}(\sigma(A_jx+b_j)- \sigma(\Pi_{N}A_j\Pi_{N}x+\Pi_{N}b_j))\\
     &\quad \leq
    C(\ell_j)\,p_{t(\ell_j)\vee k_0}(\sigma(A_jx+b_j)- \sigma(\Pi_{N}A_j\Pi_{N}x+\Pi_{N}b_j))\\
    &\quad \leq
    C(\ell_j)C_{t(\ell_j)\vee k_0}\,p_{t(\ell_j)\vee k_0}(A_jx+b_j- \Pi_{N}A_j\Pi_{N}x-\Pi_{N}b_j)\\
    &\quad \leq
    C(\ell_j,\sigma)\left\{
    p_{t(\ell_j)\vee k_0}(A_jx- \Pi_{N}A_jx)+
    p_{t(\ell_j)\vee k_0}(\Pi_NA_jx- \Pi_{N}A_j\Pi_{N}x)\right.\\
    &\quad\quad\quad\quad\quad\quad\quad\quad\left. +p_{t(\ell_j)\vee k_0}(b_j-\Pi_{N}b_j)
    \right\}\\
    &\quad \leq
    C(\ell_j,\sigma)\left\{
    p_{t(\ell_j)\vee k_0}(A_jx- \Pi_{N}A_jx)+
    C'(\ell_j,\sigma)
    p_{m(\ell_j,\sigma)}(A_jx-A_j\Pi_{N}x)\right.\\
    &\quad\quad\quad\quad\quad\quad\quad\quad\left. +p_{t(\ell_j)\vee k_0}(b_j-\Pi_{N}b_j)
    \right\}\\
    &\quad \leq
    C(\ell_j,\sigma)\left\{
    p_{t(\ell_j)\vee k_0}(A_jx- \Pi_{N}A_jx)+
    C'(\ell_j,\sigma,A_j)
    p_{n(\ell_j,\sigma,A_j)}(x-\Pi_{N}x)\right.\\
    &\quad\quad\quad\quad\quad\quad\quad\quad\left. +p_{t(\ell_j)\vee k_0}(b_j-\Pi_{N}b_j)
    \right\}\\
    &\quad \leq
    C(\ell_j,\sigma)\left\{
    \sup_{x\in K}p_{t(\ell_j)\vee k_0}(A_jx- \Pi_{N}A_jx)\right.\\
    &\quad\quad\quad\quad\quad\quad\quad\quad\left.
    +C'(\ell_j,\sigma,A_j)\sup_{x\in K}  p_{n(\ell_j,\sigma,A_j)}(x-\Pi_{N}x) +p_{t(\ell_j)\vee k_0}(b_j-\Pi_{N}b_j)
    \right\}\\
    &\quad \leq
    C(\ell_j,\sigma)\left\{
    \sup_{y\in A_jK}p_{t(\ell_j)\vee k_0}(y- \Pi_{N}y)\right.\\
    &\quad\quad\quad\quad\quad\quad\quad\quad\left.
    +C'(\ell_j,\sigma,A_j)\sup_{x\in K}  p_{n(\ell_j,\sigma,A_j)}(x-\Pi_{N}x) +p_{t(\ell_j)\vee k_0}(b_j-\Pi_{N}b_j)
    \right\}.
\end{align*}
Observe that $A_jK\subset\X$ is compact. Setting for convenience $\sigma_j := \sigma(A_jK +b_j)$, and noticing that it is compact, we eventually arrive at
\begin{equation*}
    \begin{split}
        &\abs{\langle \ell_j,\sigma(A_jx+b_j)\rangle-\langle\ell_j\circ\Pi_N,\sigma(
    \Pi_{N}A_j\Pi_{N}x+\Pi_{N}b_j)\rangle}  \\
       &\quad \leq
    C(\ell_j,\sigma)\left\{
    \sup_{y\in A_jK}p_{t(\ell_j)\vee k_0}(y- \Pi_{N}y) + 
    \sup_{y\in\sigma_j}p_{r(\ell_j)}(y-\Pi_Ny)
    \right.\\
    &\quad\quad\quad\quad\quad\quad\quad\quad\left.
    +C'(\ell_j,\sigma,A_j)\sup_{x\in K}  p_{n(\ell_j,\sigma,A_j)}(x-\Pi_{N}x) +p_{t(\ell_j)\vee k_0}(b_j-\Pi_{N}b_j)
    \right\}.
    \end{split}
\end{equation*}

By the approximation property provided by the Schauder basis $(e_k)_{k\in\N}$, we may find $N(j)\in\N$ such that:
\begin{equation*}
    \begin{cases}
     \sup_{y\in A_jK}p_{t(\ell_j)\vee k_0}(y- \Pi_{N}y)< \frac{\delta}{4M C(\ell_j,\sigma)}\\
     \sup_{y\in\sigma_j}p_{r(\ell_j)}(y-\Pi_Ny) < \frac{\delta}{4M C(\ell_j,\sigma)}\\
     \sup_{x\in K}  p_{n(\ell_j,\sigma,A_j)}(x-\Pi_{N}x)  < \frac{\delta}{4MC(\ell_j,\sigma)C'(\ell_j,\sigma,A_j)},\quad \text{if } A_j\neq 0\\
    p_{t(\ell_j)\vee k_0}(b_j-\Pi_{N}b_j) < \frac{\delta}{4M C(\ell_j,\sigma)}
    \end{cases}
\end{equation*}
for all $N\geq N(j)$. With this choice, we then have
\begin{equation*}
    \sup_{x\in K}
    \abs{\langle \ell_j,\sigma(A_jx+b_j)\rangle-\langle\ell_j\circ\Pi_N,\sigma(
    \Pi_{N}A_j\Pi_{N}x+\Pi_{N}b_j)\rangle}
    <\delta/M. 
\end{equation*}
Therefore, setting $N_\ast:=\max\{N(1),\dots,N(M)\}$, we conclude that for all $N\geq N_\ast$
\begin{equation*}
    \sup_{x\in K}\abs{f(x)-\sum_{j=1}^M\langle \ell_j\circ\Pi_N,\sigma(
    \Pi_{N}A_j\Pi_{N}x+\Pi_{N}b_j)\rangle}<\varepsilon+\delta.
\end{equation*}

\end{proof}
Again, the required function $\mathcal{N}^{\varepsilon}: \X\rightarrow \R$ exists in view of Theorem \ref{prop: density}. However, we need to enhance Example \ref{ex: some concrete examples} to show that activation functions $\sigma$ satisfying condition \eqref{eq: Lipschitz 2} exist.
\begin{example}\label{ex: some concrete examples 2}
Let $\X$ be a real Fr\'echet space (not necessarily admitting a Schauder basis). Consider a function $\beta \in \operatorname{Lip} (\R;\R)$ such that 
\begin{equation*}
     \lim_{\xi\to\infty} \beta(\xi)=1,\;\lim_{\xi\to-\infty} \beta(\xi)=0, \;\beta(0)=0,
\end{equation*}
and arbitrary $z\in\X, z\neq 0$. Let $\psi\in\X'\setminus \{0\}$. Define
\begin{equation*}
    \sigma(x) = \beta(\psi(x))z,\quad x\in\X.
\end{equation*}
Evidently, $\sigma$ is continuous and von Neumann-bounded, because for any $j\in\N$
\begin{equation*}
    p_j(\sigma(x)) \leq \abs{\beta(\psi(x))} p_j(z)\leq \norm{\beta}_\infty p_j(z) <\infty
\end{equation*}
uniformly in $x\in\X$. Furthermore, it is clear that $\sigma$ satisfies \eqref{eq: abstract condition on sigma}. Let us finally check that condition \eqref{eq: Lipschitz 2} is met. To this aim, let $k\in\N$. We have
\begin{equation*}
    \begin{split}
        p_k(\sigma(x)-\sigma(y)) &= \abs{\beta(\psi(x))-\beta(\psi(y))} p_k(z)\\
        & \leq \operatorname{Lip}(\beta) p_k(z)) \abs{\psi(x)-\psi(y)}\\
        & \leq \operatorname{Lip}(\beta) p_k(z) C_{\psi}\, p_{m(\psi)}(x-y)\\
        & := C(\beta,z,\psi; k) \, p_{m(\psi)}(x-y), \quad x,y\in\X
    \end{split}
\end{equation*}
for some $m(\psi)\in\N$. Therefore, for any $k\geq m(\psi)$, since the seminorms are non-decreasing,
we have
\begin{equation*}
    p_k(\sigma(x)-\sigma(y)) \leq C(\beta,z,\psi; k) \, p_k(x-y), \quad x,y\in\X.
\end{equation*}

\end{example}

\section{Multi-layer Neural Networks}\label{sec:multilayer}
In this section we are going to show that results analogous to Theorems~\ref{prop: density} and ~\ref{prop: discriminatory condition is satisfied} hold also for multi-layer (deep) neural networks with a fixed number $n>1$ of layers. We consider the following $n$-layer neural network 
\begin{equation*}
    \mathcal{N}_{\ell,A_1,b_1,\dots ,A_n,b_n} : \X\to \mathbb F, \quad \mathcal{N}_{\ell,A_1,b_1,\dots ,A_n,b_n}(x):= \langle \ell,
    (\sigma\circ T_1 \circ \cdots \circ \sigma\circ T_n)(x)
    \rangle, \quad x\in\X,
\end{equation*}
with $\ell\in \X'$, $A_1,\dots ,A_n\in\mathcal{L}(\X)$, $b_1,\dots ,b_n\in\X$, $\sigma:\X\to\X$ continuous, and where we have set
\begin{equation*}
    T_j (x):= A_jx + b_j, \quad x\in\X,\, j=1,\dots,n.
\end{equation*}
Define 
\begin{equation*}
    \mathfrak N(\sigma) := \Span\{
    \mathcal{N}_{\ell,A_1,b_1,\dots ,A_n,b_n} ; \,\,\ell\in\X', A_1,\dots ,A_n\in\mathcal{L}(\X),b_1,\dots, b_n\in\X    \}.
\end{equation*}
Before embarking on the proof of the density of $\mathfrak N(\sigma)$, we need to establish the following result, which will turn out to be very fruitful in the sequel.

\begin{lemma}\label{lemma: sigma(Ay) not zero}
Assume that $\X$ is a real separable Fr\'echet space. Let $\sigma:\X\to\X$ be continuous and satisfying the following condition: there exist $\psi\in \X'\setminus\{0\}$ and $0\neq u_+\in\X$ such that 
\begin{equation*}
\begin{cases}
\lim_{\lambda\to\infty} \sigma(\lambda x) = u_+, \text{ if } x\in \Psi_+\\
\lim_{\lambda\to\infty} \sigma(\lambda x) = 0, \text{ if } x\in \Psi_-\\
\lim_{\lambda\to\infty} \sigma(\lambda x) = 0, \text{ if } x\in \Psi_0\\
\end{cases}
\end{equation*}
Let $0\neq y \in \X$ be arbitrary. Then there exists $A\in\mathcal{L}(\X)$ such that $\sigma(Ay)\neq 0$.
\end{lemma}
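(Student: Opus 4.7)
The plan is to build a concrete $A\in\mathcal{L}(\X)$ that sends $y$ deep into the half-space $\Psi_+$, so that the one-sided limit in the hypothesis forces $\sigma(Ay)$ to lie near the nonzero vector $u_+$.

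First, since $\psi\in\X'\setminus\{0\}$, the set $\Psi_+=\{x\in\X;\langle\psi,x\rangle>0\}$ is non-empty, so we fix some $x_0\in\Psi_+$. Next, because $y\neq 0$ and $\X$ is a Hausdorff locally convex space, the Hahn--Banach theorem provides $\phi\in\X'$ with $\phi(y)=1$ (one separates $\{y\}$ from $\{0\}$ by an open convex neighbourhood of the origin not containing $y$, obtains a continuous linear functional not vanishing at $y$, and rescales).

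For each $\lambda>0$, define
\begin{equation*}
    A_\lambda : \X\to\X,\qquad A_\lambda(x) := \lambda\,\phi(x)\,x_0.
\end{equation*}
This map is linear and continuous (composition of the scalar-valued $\phi\in\X'$ with the continuous map $t\mapsto \lambda t x_0$), hence $A_\lambda\in\mathcal{L}(\X)$. By construction $A_\lambda y = \lambda x_0$, and since $x_0\in\Psi_+$ we have $A_\lambda y\in\Psi_+$ for every $\lambda>0$. Invoking the assumed one-sided separating property along the ray $\lambda\mapsto \lambda x_0$ yields
\begin{equation*}
    \sigma(A_\lambda y) = \sigma(\lambda x_0) \longrightarrow u_+ \quad \text{as } \lambda\to\infty.
\end{equation*}
Because $u_+\neq 0$ and $\X$ is Hausdorff, $u_+$ has an open neighbourhood not containing $0$, so for all sufficiently large $\lambda$ we must have $\sigma(A_\lambda y)\neq 0$. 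Taking $A:=A_\lambda$ for any such $\lambda$ produces the required operator.

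There is no genuine obstacle: the only non-trivial ingredient is the Hahn--Banach theorem to produce a continuous $\phi$ with $\phi(y)=1$, which is available in any Fr\'echet space. The construction shows moreover that, given any $y\neq 0$, we can force $Ay$ to land anywhere on the ray $\lambda x_0$ we please, which is why only the one-sided condition on $\sigma$ is needed.
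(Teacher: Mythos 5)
Your proof is correct, and it takes a cleaner route than the paper's. The paper splits into the two cases $y\in\Psi_0$ and $y\notin\Psi_0$: in the second case it simply takes $A=\pm\lambda I$, while in the first it invokes its auxiliary Lemma~\ref{lemma: trivial lemma} to produce $z$ with $\phi(z)=1$ and $\psi(z)\neq 0$, forms the projection $\Pi_{\Phi_0}x=x-\phi(x)z$ onto $\ker(\phi)$, checks that $\psi(\Pi_{\Phi_0}y)=-\phi(y)\psi(z)\neq 0$, and then scales by $\pm\lambda$ depending on which half-space $\Pi_{\Phi_0}y$ lands in. You bypass all of this with the single rank-one operator $A_\lambda x=\lambda\,\phi(x)\,x_0$, which places $Ay$ on a prescribed ray inside $\Psi_+$ regardless of where $y$ sits relative to $\Psi_0$; the only external input is that the dual of a Fr\'echet space separates points, which the paper also uses (inside Lemma~\ref{lemma: trivial lemma}). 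Every step checks out: $A_\lambda\in\mathcal{L}(\X)$, $A_\lambda y=\lambda x_0\in\Psi_+$, the separating hypothesis gives $\sigma(\lambda x_0)\to u_+\neq 0$, and Hausdorffness finishes the argument. What your approach buys is the elimination of both the case distinction and the sign ambiguity (no need to decide whether the image lies in $\Psi_+$ or $\Psi_-$, since you aim directly at $\Psi_+$); what the paper's approach buys is essentially nothing extra for this lemma, though its projection operators are reused elsewhere (e.g.\ in Lemma~\ref{lemma: replication of linear functionals}). Neither proof actually uses separability.
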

\begin{proof}
We need to distinguish two cases:
\begin{enumerate}
    \item $y \in \Psi_0$,
    \item $y \notin \Psi_0$.
\end{enumerate}
In the first case, let $\phi\in\X': \phi(y)\neq 0$. By Lemma \ref{lemma: trivial lemma}, choose $z$ accordingly, i.e. $\phi(z)=1,\psi(z)\neq 0$. Consider the projection onto $\Phi_0=\ker (\phi)$
\begin{equation*}
    \Pi_{\Phi_0}:\X\to \Phi_0, \quad x \mapsto x_{\Phi_0} = x - \phi(x)z,
\end{equation*}
which we know belongs to $\mathcal{L}(\X)$. Thus, $\psi(\Pi_{\Phi_0}y) = -\phi(y)\psi(z)\neq 0$, namely $\Pi_{\Phi_0}y \notin \Psi_0$. If $\Pi_{\Phi_0}y\in \Psi_+$, set $A=\lambda \Pi_{\Phi_0}$, where $\lambda>0$. Then $\sigma(\lambda \Pi_{\Phi_0}y)\to u_+\neq 0$ as $\lambda \to \infty$, and therefore for $\lambda \gg 0$ we obtain $\sigma(Ay)\neq 0$. If on the other hand $\Pi_{\Phi_0}y\in \Psi_-$, set $A=-\lambda \Pi_{\Phi_0}$ this time, to get the same conclusion, i.e. $\sigma(Ay)\neq 0$.

If $y\notin \Psi_0$, then define $A=\pm\lambda I$ with $\lambda\gg0$, accordingly if $y\in \Psi_+$ or $\in\Psi_-$.
\end{proof}
With this result at hand, we are now ready to prove:
\begin{proposition}\label{prop:density:multilayer}
Let $\X$ be a real and separable Fr\'echet space, and let $\sigma:\X\to\X$ be von Neumann-bounded and satisfy the conditions of Lemma~\ref{lemma: sigma(Ay) not zero}. Then $ \mathfrak N(\sigma)$ is dense in $C(\X;\mathbb R)$ with respect to the topology of compact subsets of $\X$.
\end{proposition}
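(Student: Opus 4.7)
The plan is to mimic the proof of Theorems~\ref{prop: density} and~\ref{prop: discriminatory condition is satisfied}, peeling off the $n$ layers one at a time from the outside in. Assume for contradiction that $\mathfrak N(\sigma)$ is not dense in $C(\X;\R)$. Then the Hahn--Banach/Riesz argument of Theorem~\ref{prop: density} (via Proposition~\ref{Riesz}) produces a compact $K\subset\X$ and a nonzero real regular Borel measure $\mu$ on $K$ that annihilates every $n$-layer neuron. For $k=1,\ldots,n$ set
\[
g_k(x;T_k,\ldots,T_n) := (\sigma\circ T_k\circ \cdots\circ \sigma\circ T_n)(x),\qquad g_{n+1}(x):=x,
\]
so that $g_k=\sigma\circ T_k\circ g_{k+1}$. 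We will prove by induction on $k\in\{2,\ldots,n+1\}$ the assertion
\[
(\ast_k)\colon\quad \mu\bigl(\{x\in K:\gamma(g_k(x;T_k,\ldots,T_n))>-t\}\bigr)=0\quad \forall\, \gamma\in\X',\, t\in\R,\, T_k,\ldots,T_n.
\]

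For the base case $(\ast_2)$, replace $T_1$ by $\lambda T_1$ in the annihilation identity and send $\lambda\to\infty$: von Neumann-boundedness gives the uniform bound $|\langle\ell,\sigma(\lambda T_1(g_2(x)))\rangle|\leq C(\ell,\sigma)$, and the hypothesis on $\sigma$ (with $u_-=u_0=0$) gives the pointwise limit $\langle\ell,u_+\rangle\mathbf 1_{\{\psi(T_1(g_2(x)))>0\}}$, the boundary $\psi(T_1(g_2(x)))=0$ being harmless because $\sigma(\lambda\cdot)\to u_0=0$ there. Dominated convergence together with a Hahn--Banach choice of $\ell$ satisfying $\langle\ell,u_+\rangle\neq 0$ yields $\mu(\{\psi(T_1(g_2))>0\}\cap K)=0$, and Lemma~\ref{lemma: replication of linear functionals} upgrades this to $(\ast_2)$ by letting $\psi\circ A_1$ range over $\X'$ and $\psi(b_1)$ over $\R$. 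The inductive step $(\ast_k)\Rightarrow(\ast_{k+1})$ is structurally identical: fix once and for all $\gamma_0\in\X'$ with $\gamma_0(u_+)=1$ (Hahn--Banach, since $u_+\neq 0$) and choose the threshold $-t_0=1/2\in(0,\gamma_0(u_+))$; replace $T_k$ by $\lambda T_k$ in $(\ast_k)$ and let $\lambda\to\infty$. Pointwise in $x$, the indicator $\mathbf 1_{\{\gamma_0(g_k(x;\lambda T_k,\ldots))>1/2\}}$ then converges to $\mathbf 1_{\{\psi(T_k(g_{k+1}(x)))>0\}}$, so dominated convergence followed by Lemma~\ref{lemma: replication of linear functionals} delivers $(\ast_{k+1})$.

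Iterating up to $k=n+1$ produces $\mu(\{x\in K:\gamma(x)>-t\})=0$ for all $\gamma\in\X'$, $t\in\R$, which is exactly equation~\eqref{eqn:right:interval} from the proof of Theorem~\ref{prop: discriminatory condition is satisfied}. The remaining measure-theoretic argument there (trivial extension of $\mu$ to $\X$, verification of the Radon property, and Bogachev~\cite[Prop.~7.12.1]{bogachev2007measure} to pass from agreement on $\sigma(\X')$ to agreement on $\mathcal B(\X)$) then forces $\mu=0$ on $\mathcal B(K)$, yielding the desired contradiction. The main technical hurdle is the calibration of $\gamma_0$ and the threshold at each inductive step, so that the $\lambda$-indexed soft half-space expressed through $g_k$ collapses, as $\lambda\to\infty$, precisely to the sharp half-space $\{\psi(T_k(g_{k+1}))>0\}$; the choice $\gamma_0(u_+)=1$ with threshold $1/2$ places the two limiting values $u_+$ and $0$ of $\sigma(\lambda\cdot)$ on opposite sides of the threshold and renders the hyperplane $\ker\psi$ inert thanks to $u_0=0$ (which, combined with continuity of $\sigma$, also forces $\sigma(0)=0$).
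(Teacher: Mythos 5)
Your proof is correct, but it takes a genuinely different route from the paper's. The paper rescales only the \emph{innermost} affine map $T_n$ and passes to the limit exactly once: the outer $n-1$ layers are then dealt with algebraically, by setting $b_1=\cdots=b_{n-1}=0$ so that the $\Psi_-\cup\Psi_0$ branch contributes $\langle \ell,(\sigma\circ A_1\circ\cdots\circ\sigma)(0)\rangle=0$ (here $\sigma(0)=0$ is genuinely needed), and by invoking Lemma~\ref{lemma: sigma(Ay) not zero} iteratively to choose $A_1,\dots,A_{n-1}$ so that the surviving coefficient $(\sigma\circ A_1\circ\cdots\circ\sigma\circ A_{n-1})(u_+)$ is nonzero; a single dominated-convergence argument then yields $\mu[K\cap T_n^{-1}(\Psi_+)]=0$ directly. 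You instead peel the layers off from the \emph{outside} in, with $n$ successive limit passages: only the first requires von Neumann-boundedness for domination, and thereafter you integrate indicators of the soft half-spaces $\{\gamma_0(g_k)>1/2\}$, collapsing them one at a time thanks to the calibration $\gamma_0(u_+)=1$ with threshold $1/2$ strictly separating the two limit values $u_+$ and $0$. Your route entirely avoids Lemma~\ref{lemma: sigma(Ay) not zero} --- the only non-degeneracy you ever need is a single Hahn--Banach functional not vanishing on $u_+$ --- at the price of an induction with $n$ dominated-convergence steps instead of one; a side benefit is that separability of $\X$ (imposed in the proposition only to feed Lemma~\ref{lemma: sigma(Ay) not zero}) plays no visible role in your argument. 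Both proofs terminate identically, reducing to \eqref{eqn:right:interval} and the Radon-measure/Baire $\sigma$-algebra endgame of Theorem~\ref{prop: discriminatory condition is satisfied}. Two minor points: in each inductive step the dominated convergence should be applied separately to the positive parts $\mu_1,\mu_2$ of the Hahn--Jordan decomposition of $\mu$ (as the paper does in the base case), and the observation $\sigma(0)=0$ that you record at the end is needed in the paper's argument but nowhere in yours.
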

\begin{proof}
Evidently, $\mathfrak N(\sigma)\subset C(\X;\mathbb R)$. Assume once again that $\text{cl}(\mathfrak N(\sigma))\subsetneq C(\X;\mathbb R)$. Then, once again we obtain the following 
\begin{equation*}
    \int_K \langle \ell,(\sigma\circ T_1 \circ \cdots \circ \sigma\circ T_n)(x)
    \rangle \mu(dx)=0,
\end{equation*}
for all $\ell\in \X'$, $A_1,\dots ,A_n\in\mathcal{L}(\X)$, $b_1,\dots ,b_n\in\X$. 

Observe that $\sigma(0)=0$. Reasoning as in the proof of Proposition \ref{prop: discriminatory condition is satisfied}, this time we get that, as $\lambda\to \infty$, pointwise in $x\in\X$,
\begin{equation*}
    \langle \ell,
    (\sigma\circ T_1 \circ \cdots \circ \sigma)(\lambda T_n(x))
    \rangle
    \to
     \begin{cases}
     \langle \ell, (\sigma\circ T_1 \circ \cdots \circ \sigma)(T_{n-1}(u_+))
     \rangle, \text{ if } T_n(x) \in \Psi_+\\
      \langle \ell, (\sigma\circ T_1 \circ \cdots \circ \sigma)(b_{n-1})
      \rangle, \text{ otherwise } \\
     \end{cases}
\end{equation*}
and hence, since $\sigma$ is von Neumann-bounded, by the dominated convergence theorem (for finite signed measures)
\begin{equation*}
\begin{split}
     &\langle \ell, (\sigma\circ T_1 \circ \cdots \circ \sigma)(T_{n-1}(u_+))
     \rangle \, \mu[K\cap T_n^{-1}(\Psi_+)] \\
      & \quad\quad+\langle \ell, (\sigma\circ T_1 \circ \cdots \circ \sigma)(b_{n-1})\rangle \left\{
\mu[K\cap T_n^{-1}(\Psi_-)] +
       \mu[K\cap T_n^{-1}(\Psi_0) ] \right\}= 0
\end{split}
\end{equation*}
for any $\ell\in \X'$, $A_1,\dots ,A_n\in\mathcal{L}(\X)$, $b_1,\dots ,b_n\in\X$. 

Choosing $b_1=b_2=\cdots =b_{n-1}=0$ results in 
\begin{equation*}
     \langle \ell, (\sigma\circ A_1 \circ \cdots \circ \sigma\circ A_{n-1})(u_+)
     \rangle \, \mu[K\cap T_n^{-1}(\Psi_+)]  = 0
\end{equation*}
for any $\ell\in \X'$, $A_1,\dots ,A_n\in\mathcal{L}(\X)$, and $b_n\in\X$. Define iteratively backward
\begin{equation*}
    \begin{cases}
    y_{n-1} = \sigma(A_{n-1}u_+)\\
    y_j = \sigma(A_jy_{j+1}), \quad j=1,\dots,n-2,
    \end{cases}
\end{equation*}
where $A_1,\dots,A_{n-1}\in\mathcal{L}(\X)$ are chosen in such a way that 
\begin{equation*}
    y_{n-1}\neq 0,\,y_{n-2}\neq 0,\dots ,\,y_1\neq 0.
\end{equation*}
This is achievable in virtue of Lemma \ref{lemma: sigma(Ay) not zero}. At the last step of the iteration we arrive at 
\begin{equation*}
     \langle \ell, y_1\rangle \, \mu[K\cap T_n^{-1}(\Psi_+)]  = 0
\end{equation*}
for any $\ell\in \X'$, $A_n\in\mathcal{L}(\X)$ and $b_n\in\X$, and hence $\mu[K\cap T_n^{-1}(\Psi_+)]=0$, namely 
\begin{equation*}
      \mu[K\cap A^{-1}(\Psi_+ +b)] =0
\end{equation*}
for any $A\in\mathcal{L}(\X)$, $b\in\X$. Following the steps in the proof of Proposition~\ref{prop: discriminatory condition is satisfied}, we conclude once more that $\mu=0$ and hence that $\mathfrak N(\sigma)$ is dense in $C(\X;\mathbb R)$. 
\end{proof}

As suggested by E \cite{WeinanE}, deep neural networks may be studied from the point of view of controlled ordinary differential equations (CODE). To recall, the $k$th layer input-output map can be represented as $x_{k+1}=x_k+V(x_k,\theta_k)$, to follow the notation in Cuchiero, Larsson and Teichmann \cite{Cuchiero2020DeepNN}. Here, $\theta_k$ is the affine map $T_k$ with training parameters $A_k$ and $b_k$, and $V$ is defined from the activation function $\sigma$,
$$
V(x_k,\theta_k):=\sigma(T_k x_k)-x_k.
$$
But then $x_k$ is the Euler discretisation scheme of the CODE
\begin{equation}
\label{CODE}
 \frac{dX_t}{dt}=V(X_t,\theta_t)\,,\quad t\in [0,1],
\end{equation}
which links the analysis of deep neural networks to CODE. We refer to Cuchiero, Larsson and Teichmann \cite{Cuchiero2020DeepNN} for a recent study for finite-dimensional deep neural networks. On the other hand, \eqref{CODE} provides a motivation for our definition of a (deep) neural network in infinite dimensions. Indeed, if we are in an infinite-dimensional vector space $\mathfrak{X}$ where we have available a theory for ODEs (a Banach space, say), then we read from \eqref{CODE} that $x\mapsto V(x,\theta_t)$ must map $\mathfrak{X}$ into itself. Thus, the activation function $\sigma$ is a mapping on $\mathfrak{X}$ into itself. The affine mapping $T$ (i.e., the control $\theta$) operates on $\mathfrak{X}$ as well. Thus, our proposed definition of (deep) neural networks in Fr\'echet spaces aligns naturally with CODEs.

\bibliography{literature}
\bibliographystyle{abbrv}

\end{document}